\newtheorem{theorem}{Theorem}[section]
\newtheorem{lemma}[theorem]{Lemma}
\theoremstyle{definition}
\newtheorem{definition}[theorem]{Definition}
\newtheorem{example}[theorem]{Example}
\numberwithin{equation}{section}
\newcommand{\C}{\mathbb{C}}
\newcommand{\e}{\epsilon}
\begin{document}

\baselineskip=21pt
\title[pseudospectrum and condition pseudospectrum]
 {Perturbation analysis for the linear operator equation using pseudospectrum and condition pseudospectrum}

\author{Krishna Kumar. G}
\address{Department of Mathematics, Baby John Memorial Government College Chavara, Kollam, Kerala, India, 691583.}
 {\email{krishna.math@gmail.com}


\subjclass[2010]{Primary 47A55; Secondary 15A09, 47A10, 47A30, 47A50}



\keywords{pseudospectrum, condition pseudospectrum, perturbation, relative error, instability}
\begin{abstract} 
In this article, we consider the linear operator equation $Ax-zx= y$ in a Banach space. The relative perturbation of the solution $x$ corresponding to the perturbation of $y$, the perturbation of $A$ and the perturbation of both $A, y$ are characterized from the pseudospectrum and the condition pseudospectrum of $A$. Certain examples are given to illustrate the results. A relation between the pseudospectrum and the condition pseudospectrum of an operator are established. The distance to instability and the distance to singularity of an operator are also found from the condition pseudospectrum of the operator.
\end{abstract}
\maketitle
\section{Introduction}
Throughout this article $X$ denotes a complex Banach space and $BL(X)$ is the Banach algebra of all bounded linear operators on $X$. Define $\mathcal{I}:= \{zI: z\in \C\}$, where $I$ is the identity operator in $BL(X)$. 
\begin{definition}
Let $A\in BL(X)$. The spectrum of $A$ is denoted by $\sigma(A)$ and is defined as
\[
\sigma(A):= \{z\in \C: A-zI \ \ \mbox{is not invertible}\}.
\]
\end{definition}
The spectrum of an operator in $BL(X)$ is generalized in many ways for various applications. The pseudospectrum and the condition pseudospectrum are two important generalizations of the spectrum of an operator.
\begin{definition}
 Let $A\in BL(X)$ and $\epsilon> 0$. The $\epsilon$-pseudospectrum of $A$ is denoted by $\Lambda_{\epsilon}(A)$ and is defined as
 \[
  \Lambda_{\epsilon}(A):= \sigma(A)\cup \left\{z\in \mathbb{C}: \|(A-zI)^{-1}\|\geq \epsilon^{-1}\right\}.
 \]
\end{definition}
Hence $\sigma(A)\subseteq \Lambda_\e(A)$ for each $\e> 0$ and $\displaystyle \bigcap_{\e>0}\Lambda_\e(A)= \sigma(A)$. For more properties and various applications of the pseudospectrum one may refer to \cite{tre}. 
\begin{definition}
 Let $A\in BL(X)$ and $0<\epsilon<1$. The $\epsilon$-condition pseudospectrum of $A$ is denoted by $\sigma_{\epsilon}(A)$ and is defined as
 \[
  \sigma_{\epsilon}(A):= \sigma(A)\cup \left\{z\in \mathbb{C}: \|A-zI\|\|(A-zI)^{-1}\|\geq \epsilon^{-1}\right\}.
 \]
\end{definition}
Hence $\sigma(A)\subseteq \sigma_\e(A)$ for each $0<\e< 1$, $\displaystyle \bigcap_{0<\e<1}\sigma_\e(A)= \sigma(A)$ and $\sigma_1(A)= \C$. The condition pseudospectrum was introduced in \cite{kul} and in the same article it is called condition spectrum. Compare to several other generalizations of the spectrum, the condition pseudospectrum is proved to be algebraically close to the spectrum. Hence the condition pseudospectrum is useful to study the perturbation analysis of operators of $BL(X)$. For more properties of the condition pseudospectrum one may refer to \cite{amm, kri, kri1, lui, kul, suk}.

Let $A\in BL(X)$. Consider the linear operator equation $Ax-zx= y$ where $x,y\in X$ and $z\in \C$. The purpose of the article is to characterize the perturbation of the solution $x$ corresponding to the following perturbations from the pseudospectrum and the condition pseudospectrum of $A$.
\begin{enumerate}
 \item the perturbation of $y$
 \item the perturbation of $A$
 \item the perturbation of both $A$ and $y$
\end{enumerate}
The following is an outline of the article. In section 2, we consider certain linear operator equations $Ax-zx= y$ to illustrate that the perturbation of the solution $x$ corresponding to the perturbation of $y$, perturbation of $A$ and perturbation of both $A, y$ depends on the pseudospectrum and the condition pseudospectrum of the operator $A$. In section 3, an upper bound and lower bound is found for the relative perturbation of the solution $x$ corresponding to the relative perturbation of $y$ from the condition pseudospectrum of $A$ (Theorem \ref{perturbation1}). An upper bound is found for the relative perturbation of the solution $x$ corresponding to the relative perturbation of the operator $A$ from the pseudospectrum of $A$ (Theorem \ref{perturbation2}) and the condition pseudospectrum of $A$ (Theorem \ref{perturbation3}). An upper bound is found for the relative perturbation of the solution $x$ corresponding to the relative perturbation of both $A$ and $y$ from the condition pseudospectrum of $A$ (
Theorem \ref{perturbation4}). Certain examples are also given to illustrate the results. In section 4, a relation connecting the pseudospectrum and the condition pseudospectrum of an operator in $BL(X)$ is given as set inclusions. The distance to instability of an operator in $BL(X)$ is also characterized from the condition pseudospectrum of the operator. In section 5, the distance to singularity of an invertible operator in $BL(X)$ is characterized from the condition pseudospectrum of the operator.
\section{Preliminaries}
Let $A\in BL(X)$. Consider the the linear operator equation
\[
Ax-zx= y 
\]
where $x, y \in X$ and $z\in \C$. The following example finds the perturbation of the solution $x$ corresponding to various perturbations of $y$. We claim that the perturbation of the solution depends on the quantity $\|A-zI\|\|(A-zI)^{-1}\|$ and hence the perturbation of the solution may characterized from the condition pseudospectrum of the operator $A$.
\begin{example} 
Consider the linear systems defined by
\[
 Ax_1-x_1= y \ \ \textnormal{and} \ \ Bx_2-x_2= y
\]
where $A= \begin{bmatrix}
         1.1&0\\0&2
        \end{bmatrix}
$, $B= \begin{bmatrix}
           1.1&10\\
           0&2
          \end{bmatrix}
$ and $y= \begin{bmatrix}
           1\\1
          \end{bmatrix}
$.
Let $\delta y$ be a small perturbation on $y$ and $\delta x_1, \delta x_2$ be the corresponding perturbations of the solution $x_1, x_2$ respectively. Then
\[
(A-I)(x_1+\delta x_1)= y+ \delta y \ \ \textnormal
{and} \ \ (B-I)(x_2+ \delta x_2)= y+ \delta y.
\]
The following table gives the perturbations of the solutions for various values of $\delta y$.
\begin{center}
\begin{tabular}{|c|c|c|c|c|}
\hline $\delta y$& $\delta x_1$& $\frac{\|\delta x_1\|}{\|x_1\|}$& $\delta x_2$&$\frac{\|\delta x_2\|}{\|x_2\|}$\\
\hline \hline $[0.01, 0.01]^T$&$[0.1,0.01]^T$&0.01&$[-0.9,0.01]^T$&0.01\\
\hline $[0.01, 0.02]^T$&$[0.1,0.02]^T$&0.0101474&$[-1.9,0.02]^T$&0.0211109\\
\hline $[0.02, 0.01]^T$&$[0.2,0.01]^T$&0.0199256&$[-0.8,0.01]^T$&0.008889\\
\hline $[0.02, 0.02]^T$&$[0.2,0.02]^T$&0.02&$[-1.8,0.02]^T$&0.02\\
\hline $[0.03, -0.03]^T$&$[0.3,-0.03]^T$&0.03&$[3.3,-0.03]^T$&0.0366659\\
\hline $[0.04, -0.04]^T$&$[0.4,-0.04]^T$&0.04&$[4.4,-0.04]^T$&0.0488879\\
\hline $[0.05, -0.05]^T$&$[0.5,-0.05]^T$&0.05&$[5.5,-0.05]^T$&0.0611098\\
\hline $[0.06, -0.06]^T$&$[0.6,-0.06]^T$&0.06&$[6.6,-0.06]^T$&0.0733318\\
\hline
\end{tabular}
\end{center}
\end{example}
\begin{center}
\begin{figure}[h]
\begin{tabular}{cc}
 \includegraphics[width=85mm]{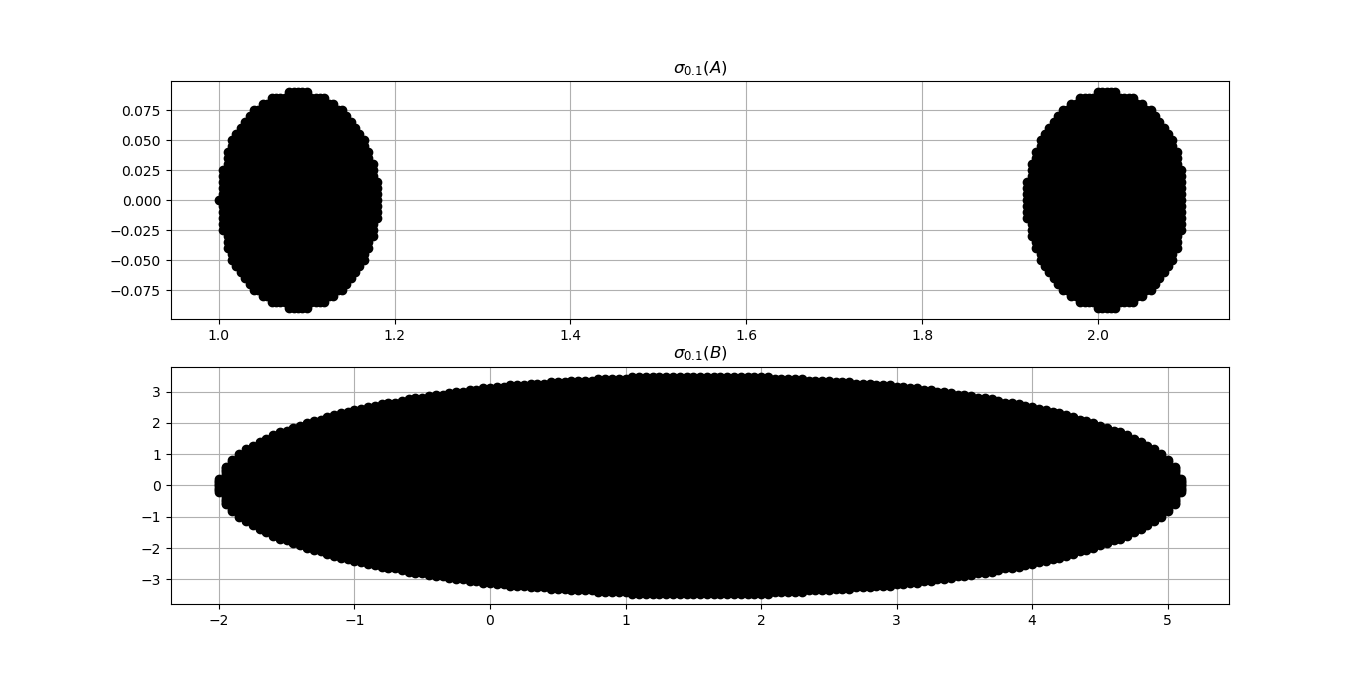}& \includegraphics[width=85mm]{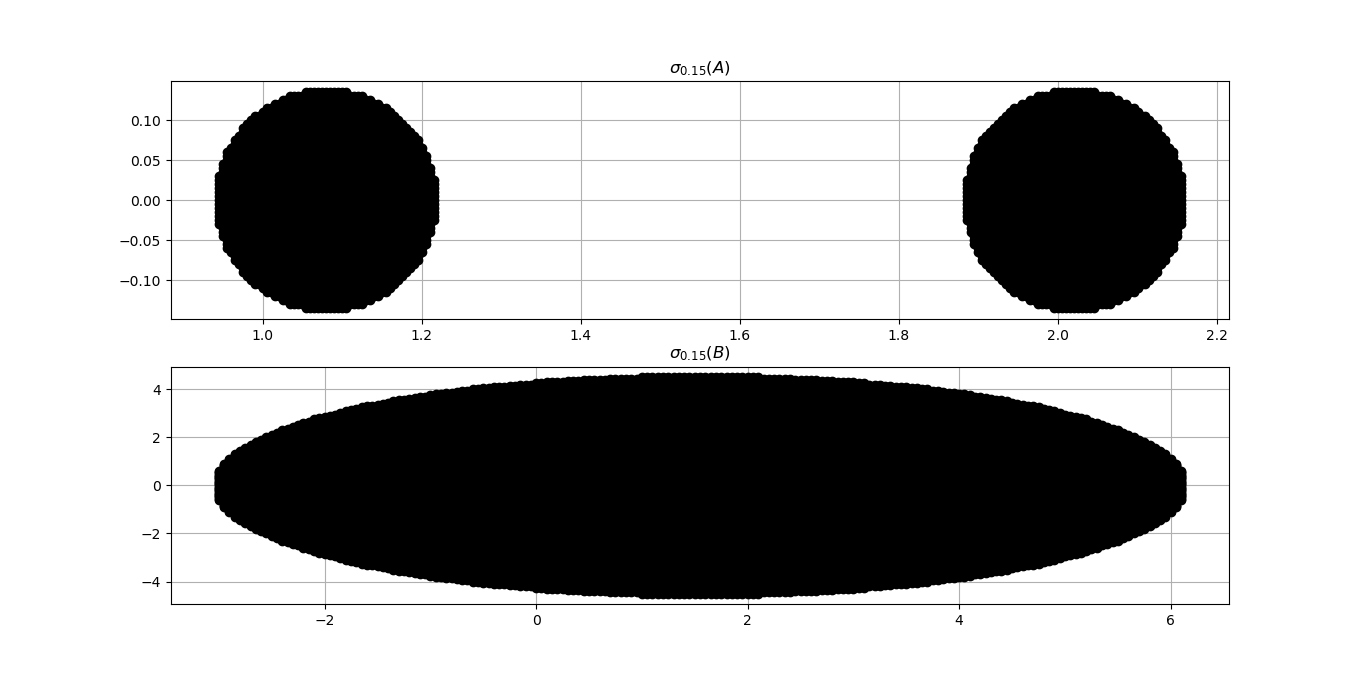}\\
 \includegraphics[width=85mm]{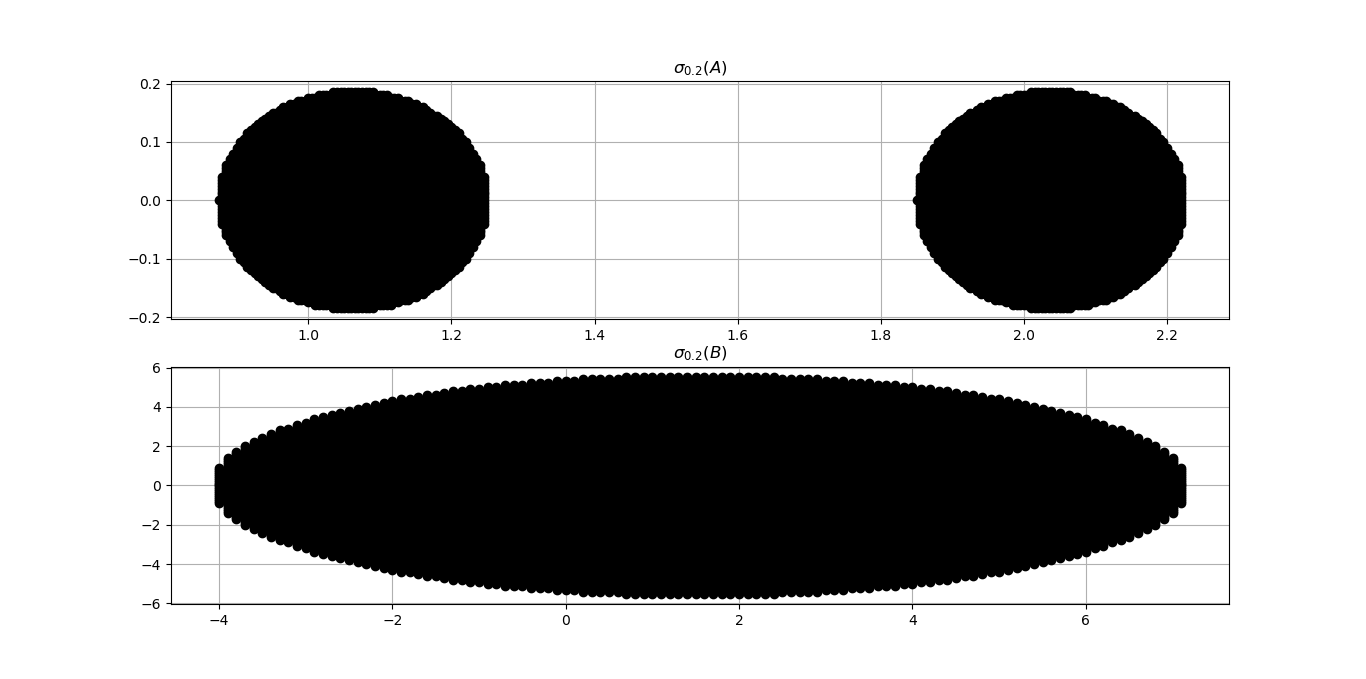}& \includegraphics[width=85mm]{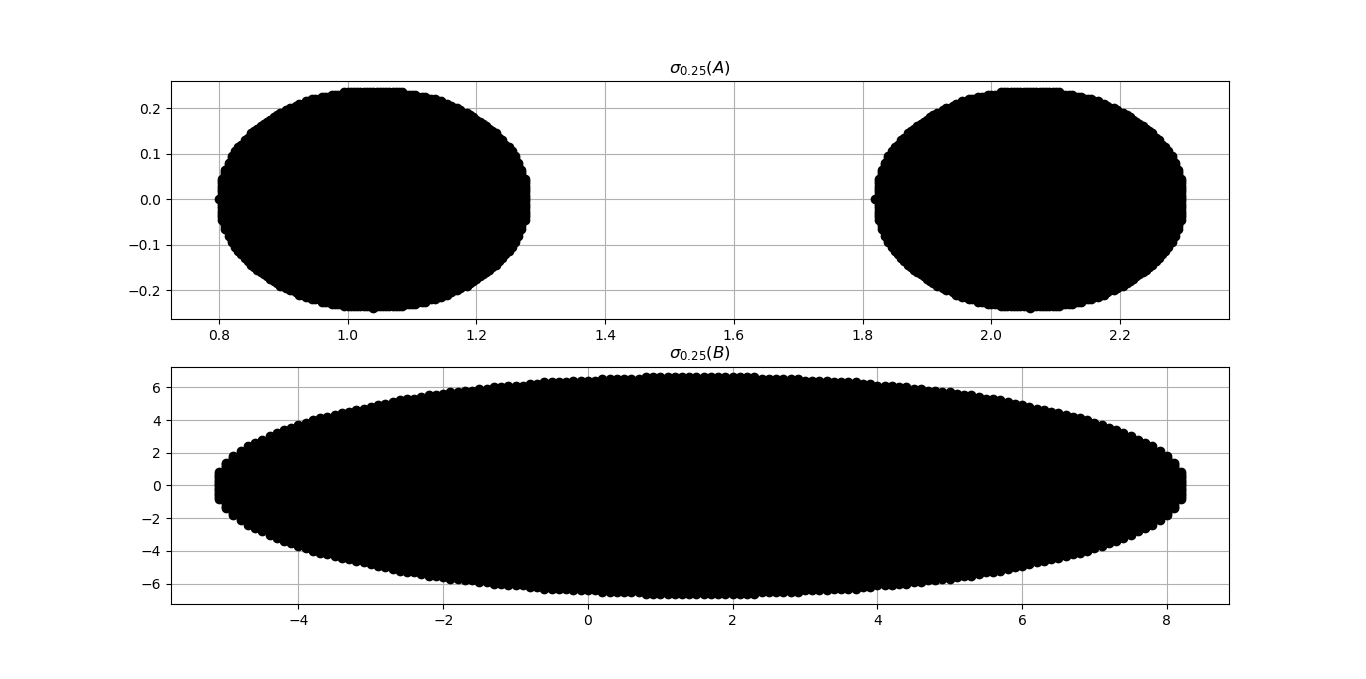}
\end{tabular}
\caption{}
\end{figure}
\end{center}
In Figure 1, the $\e$-condition pseudospectrum of $A$ and $B$ are found for $\e= 0.1, 0.15, 0.2$ and $0.25$.  For $z\in \C$ and $A\in BL(X)$ define 
\[
\kappa(z,A):= \sup\{0<\e<1: z\notin \sigma_\e(A)\}.
\] 
Then $\kappa(z,B)\leq \kappa(z,A)$ for every $z\in \C$ and $\sigma_\e(A)\subseteq \sigma_\e(B)$ for each $0< \e< 1$. The following example finds the perturbation of the solution $x$ corresponding to various perturbations of $A$. We claim that the perturbation of the solution depends on the quantity $\|(A-zI)^{-1}\|$ and hence the perturbation of the solution may characterized from the pseudospectrum of the operator $A$.
\begin{example}
Consider the linear systems defined by
\[
 Ax_1-2x_1= y \ \ \textnormal{and} \ \ Bx_2-2x_2= y
\]
where $A= \begin{bmatrix}
         1&0&-1\\0&2.1&1\\0&0&3
        \end{bmatrix}
$, $B= \begin{bmatrix}
           1&10&10\\
           0&2.1&10\\
           0&0&3
          \end{bmatrix}
$ and $y= \begin{bmatrix}
           1\\1\\1
          \end{bmatrix}
$. Let $\Delta A, \Delta B$ be small perturbations on $A, B$ and $\delta x_1, \delta x_2$ be the corresponding perturbations of the solution $x_1, x_2$ respectively. Then
\[
(A-2I+\Delta A)(x_1+\delta x_1)= y \ \ \textnormal{and} \ \ (B-2I+\Delta B)(x_2+ \delta x_2)= y.
\]
The following table gives the perturbation of the solution $x$ corresponding to various perturbations of $A$ and $B$. Choose $\Delta A= \Delta B= 
\begin{bmatrix}
\e_1&0&0\\
0&\e_2&0\\
0&0&0
\end{bmatrix}
$.
\begin{center}
\begin{tabular}{|c|c|c|c|c|}
\hline $\Delta A= \Delta B$& $\delta x_1$& $\frac{\|\delta x_1\|}{\|x_1\|}$& $\delta x_2$&$\frac{\|\delta x_2\|}{\|x_2\|}$\\
\hline \hline $\e_1= 0.01, \e_2= 0.01$&$[-0.02020202,0,0]^T$&0.0090346&$[73.64463,8.18182,0]^T$&0.0827413 \\
\hline $\e_1= 0.02, \e_2= 0.02$&$[-0.04081633,0,0]^T$&0.0182536&$[134.87755,15,0]^T$&0.1515397\\
\hline $\e_1= 0.03, \e_2= 0.03$&$[-0.06185567,0,0]^T$&0.0276626&$[186.55908,20.76923,0]^T$&0.2096085\\
\hline $\e_1= 0.04, \e_2= 0.04$&$[-0.08333333,0,0]^T$&0.0372677&$[230.73214,25.71428,0]^T$&0.2592425\\
\hline $\e_1= 0.05, \e_2= 0.05$&$[-0.10526316,0,0]^T$&0.0470751&$[268.89473,30,0]^T$&0.302125\\
\hline $\e_1= 0.06, \e_2= 0.06$&$[-0.12765957,0,0]^T$&0.0570910&$[302.170212,33.75,0]^T$&0.339517\\
\hline $\e_1= 0.07, \e_2= 0.07$&$[-0.15053763,0,0]^T$&0.0673224&$[ 331.41745, 37.05882,0]^T$&0.3723843\\
\hline
\end{tabular}
\end{center}
\end{example}
\begin{center}
\begin{figure}[h]
\begin{tabular}{cc}
 \includegraphics[width=85mm]{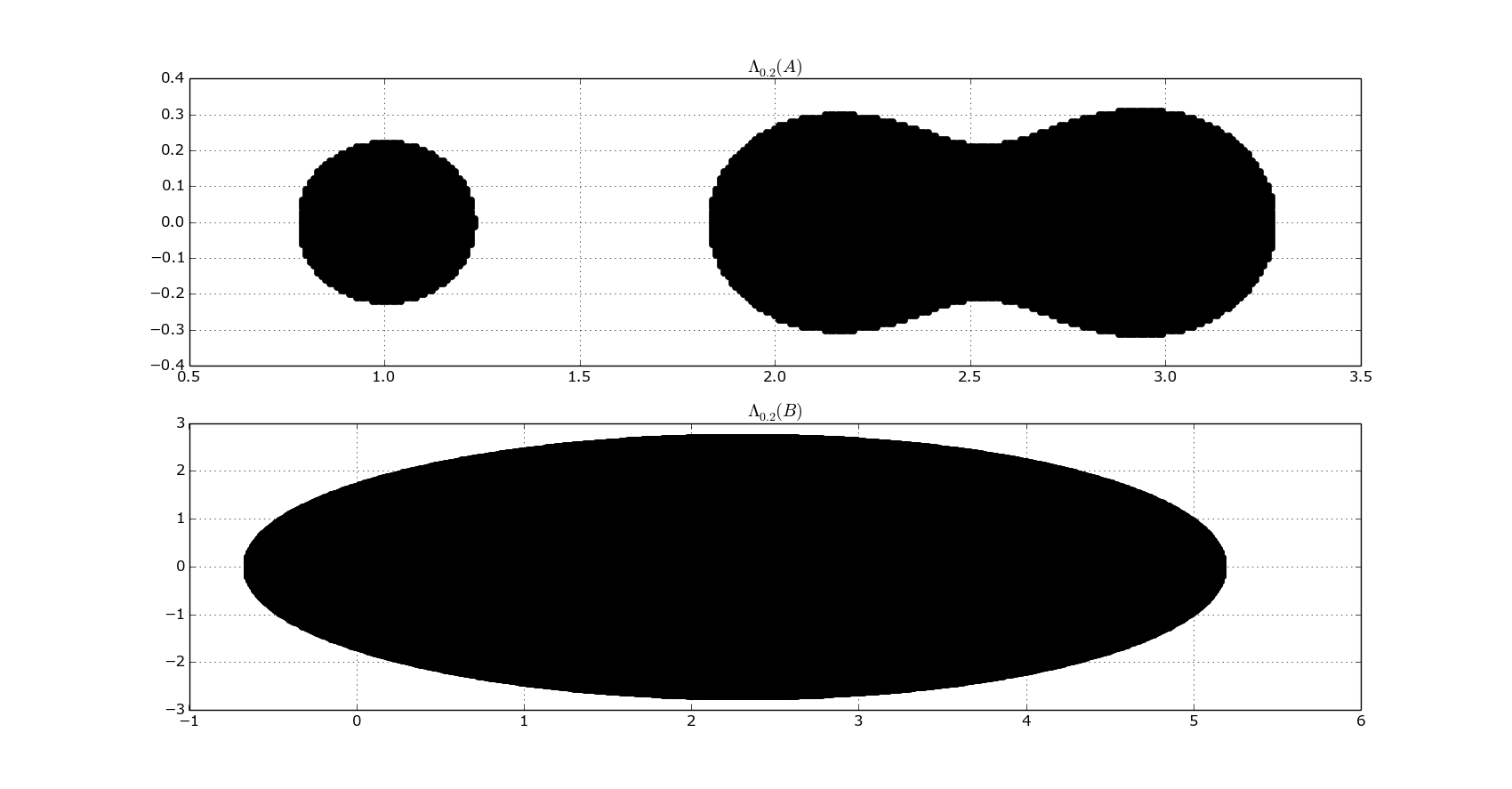}& \includegraphics[width=85mm]{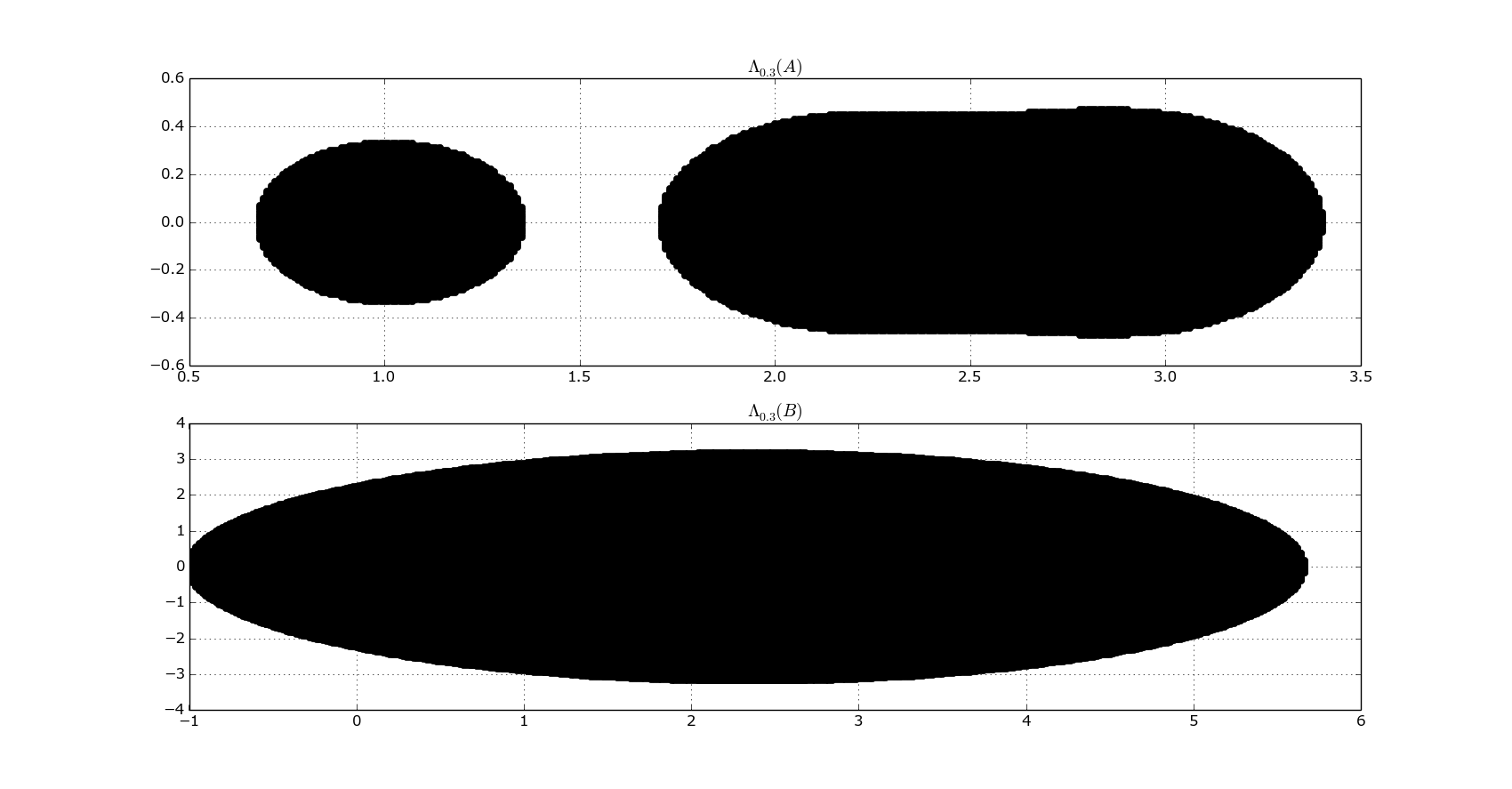}\\
 \includegraphics[width=85mm]{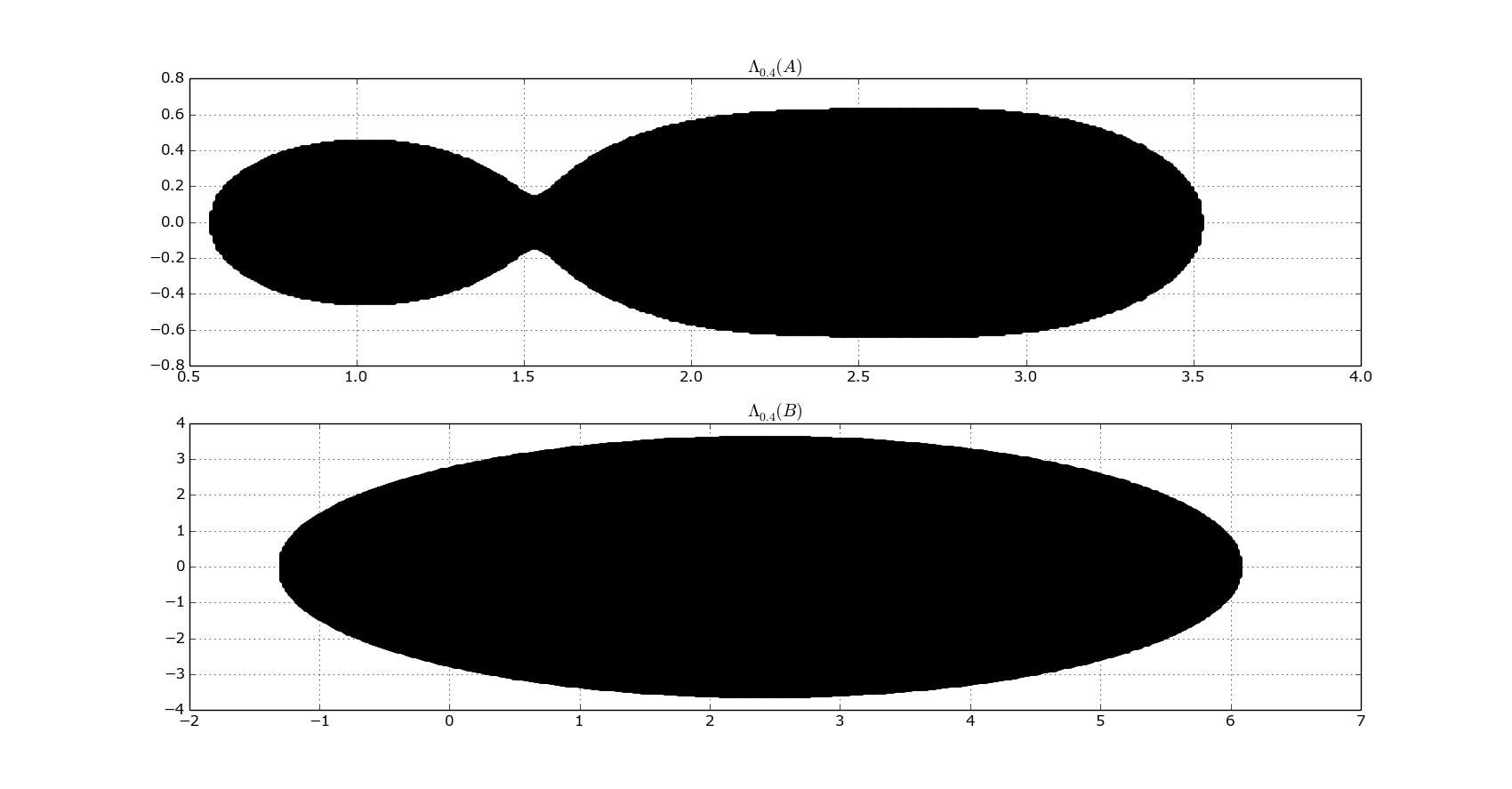}& \includegraphics[width=85mm]{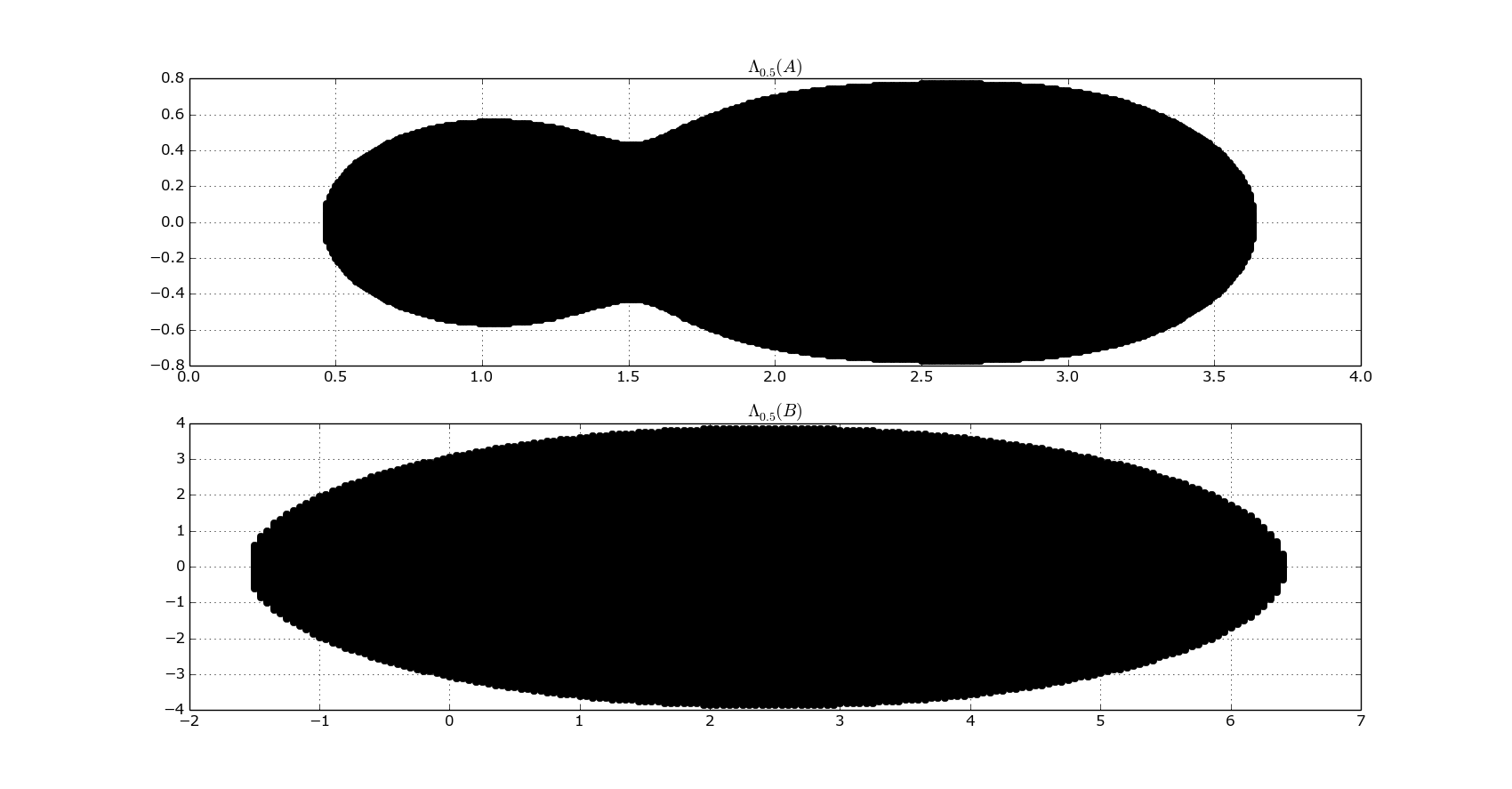}
\end{tabular}
\caption{}
\end{figure}
\end{center}
In Figure 2, the $\e$-pseudospectrum of $A$ and $B$ are found for $\e= 0.2, 0.3, 0.4$ and $0.5$. For $z\in \C$ and $A\in BL(X)$ define 
\[
\kappa_1(z,A):= \sup\{\e>0: z\notin \Lambda_\e(A)\}.
\] 
Then $\kappa_1(z,B)\leq \kappa_1(z,A)$ for every $z\in \C$ and $\Lambda_\e(A)\subseteq \Lambda_\e(B)$ for each $\e>0$. The following example finds the perturbation of the solution $x$ corresponding to various perturbations of $A, y$. We claim that the perturbation of the solution depends on the quantity $\|A-zI\|\|(A-zI)^{-1}\|$ and hence the perturbation of the solution may characterized from the condition pseudospectrum of the operator $A$.
\begin{example}
Consider the linear systems defined by
\[
 Ax_1-3x_1= y \ \ \textnormal{and} \ \ Bx_2-3x_2= y
\]
where $A= \begin{bmatrix}
         1&0&-1\\0&2&1\\0&0&3.1
        \end{bmatrix}
$, $B= \begin{bmatrix}
           1&10&10\\
           0&2&10\\
           0&0&3.1
          \end{bmatrix}
$ and $y= \begin{bmatrix}
           1\\1\\1
          \end{bmatrix}
$. Let $\Delta A, \Delta B$ be a small perturbations on $A, B$ respectively
and $\delta y$ be a small perturbation on $y$. Further let $\delta x_1, \delta x_2$ be the corresponding perturbations of the solution $x_1, x_2$. Then
\[
(A-3I+\Delta A)(x_1+\delta x_1)= y+ \delta y \ \ \textnormal{and} \ \ (B-3I+\Delta B)(x_2+ \delta x_2)= y+\delta y.
\]
The following table gives the perturbation of the solution $x$ corresponding to various perturbations of $A, B$ and $\delta y$. Choose $\Delta A= \Delta B= 
\begin{bmatrix}
0&0&0\\
0&\e_1&0\\
0&0&\e_2
\end{bmatrix}
$.
\begin{center}
\begin{tabular}{|c|c|c|c|}
\hline $\Delta A= \Delta B$&$\delta y$& $\delta x_1$& $\delta x_2$\\
\hline 
\hline $\e_1= 0.01, \e_2= 0.01$&$[0.01,0.01,0.01]^T$&$[0.404,-0.745,-0.818]^T$&$[-40.468,-7.274, -0.818]^T$ \\
\hline $\e_1= 0.02, \e_2= 0.02$&$[0.02,0.02,0.02]^T$&$[0.74,-1.367,-1.5]^T$&$[-74.040,-13.306,-1.5]^T$\\
\hline $\e_1= 0.03, \e_2= 0.03$&$[0.03,0.03,0.03]^T$&$[1.023,-1.893,-2.076]^T$&$[-102.302,-18.380,-2.076]^T$\\
\hline $\e_1= 0.04, \e_2= 0.04$&$[0.04,0.04,0.04]^T$&$[1.265,-2.345,-2.571]^T$&$[-126.389,-22.702,-2.571]^T$\\
\hline $\e_1= 0.05, \e_2= 0.05$&$[0.05,0.05,0.05]^T$&$[1.475,-2.736,-3]^T$&$[-147.130,-26.421,-3]^T$\\
\hline $\e_1= 0.06, \e_2= 0.06$&$[0.06,0.06,0.06]^T$&$[1.657,-3.079,-3.375]^T$&$[-165.149,-29.648,-3.375]^T$\\
\hline $\e_1= 0.07, \e_2= 0.07$&$[0.07,0.07,0.07]^T$&$[1.817,-3.382,-3.705]^T$&$[-180.923,-32.471,-3.705]^T$\\
\hline
\end{tabular}
\end{center}
\end{example}
\begin{center}
\begin{figure}[h]
\begin{tabular}{cc}
 \includegraphics[width=85mm]{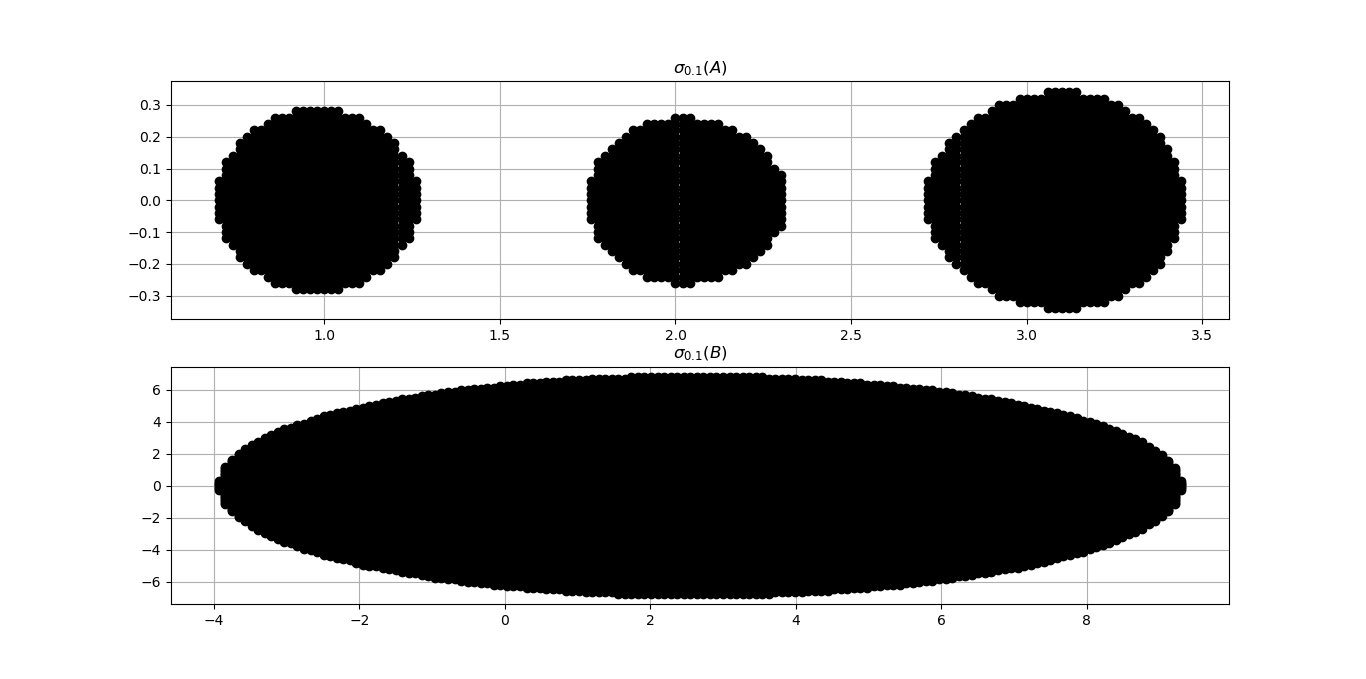}& \includegraphics[width=85mm] {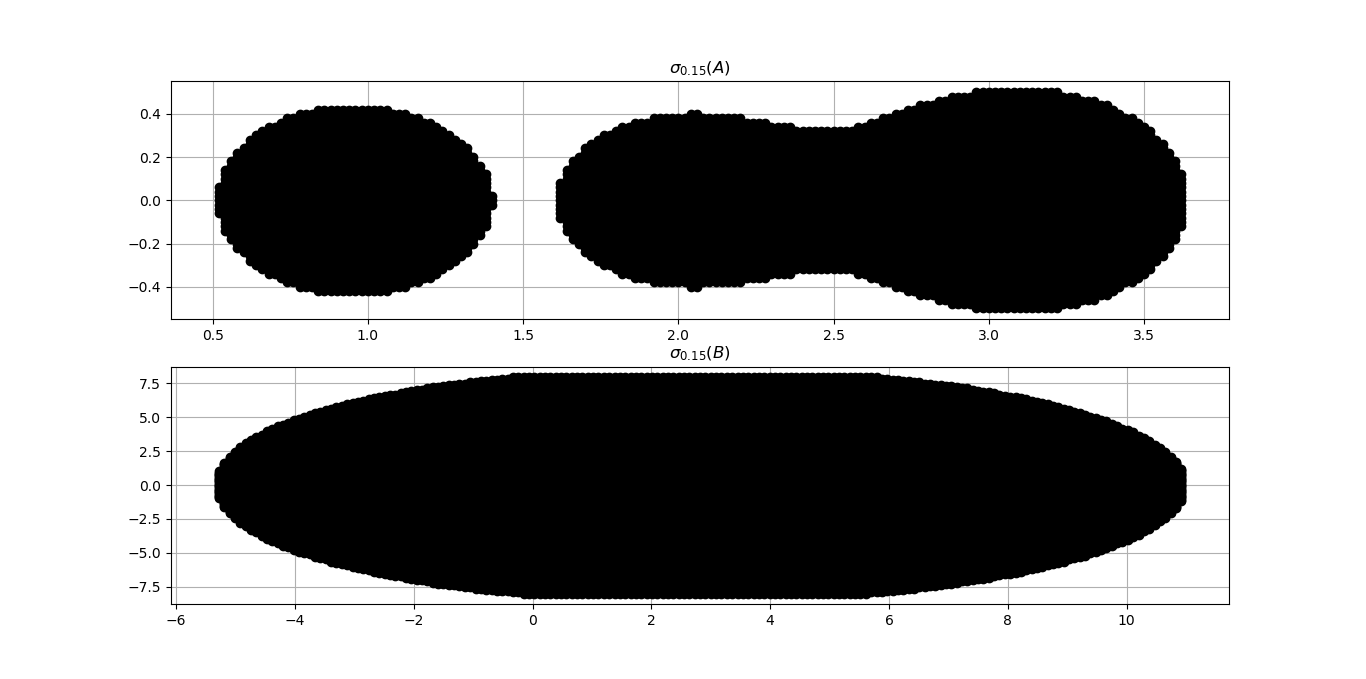}\\
 \includegraphics[width=85mm]{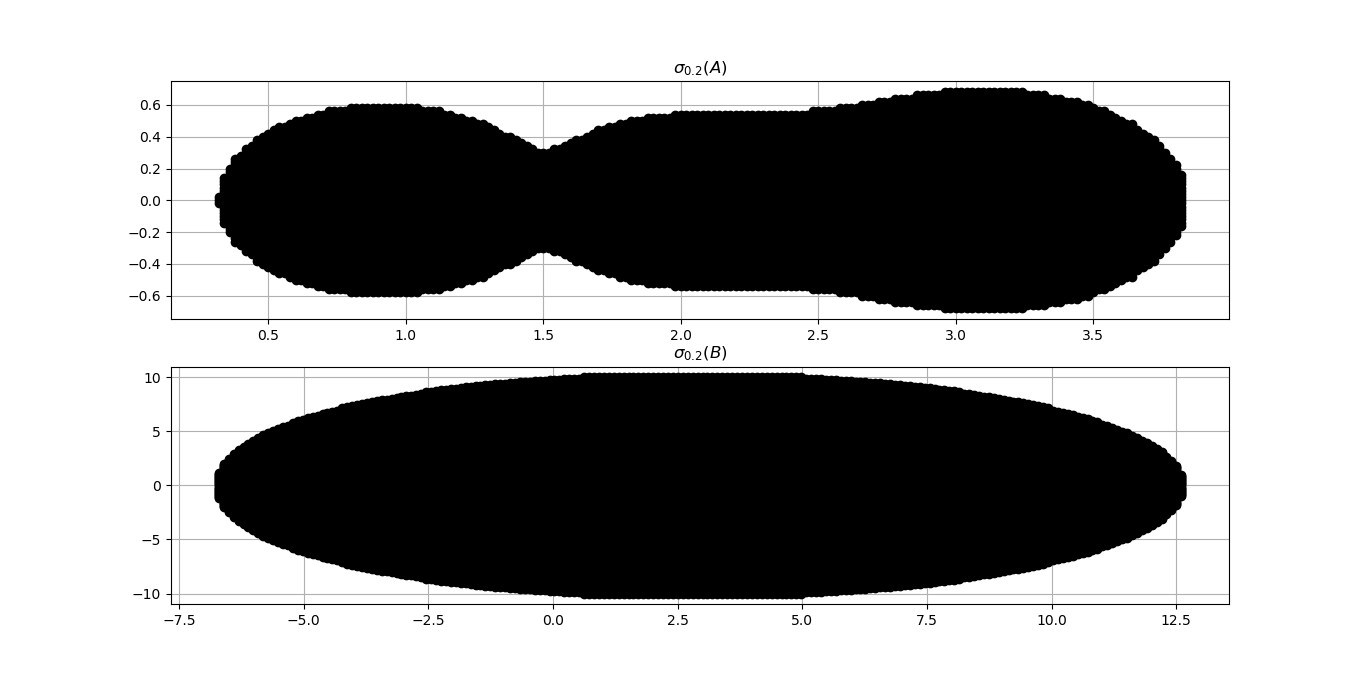}& \includegraphics[width=85mm] {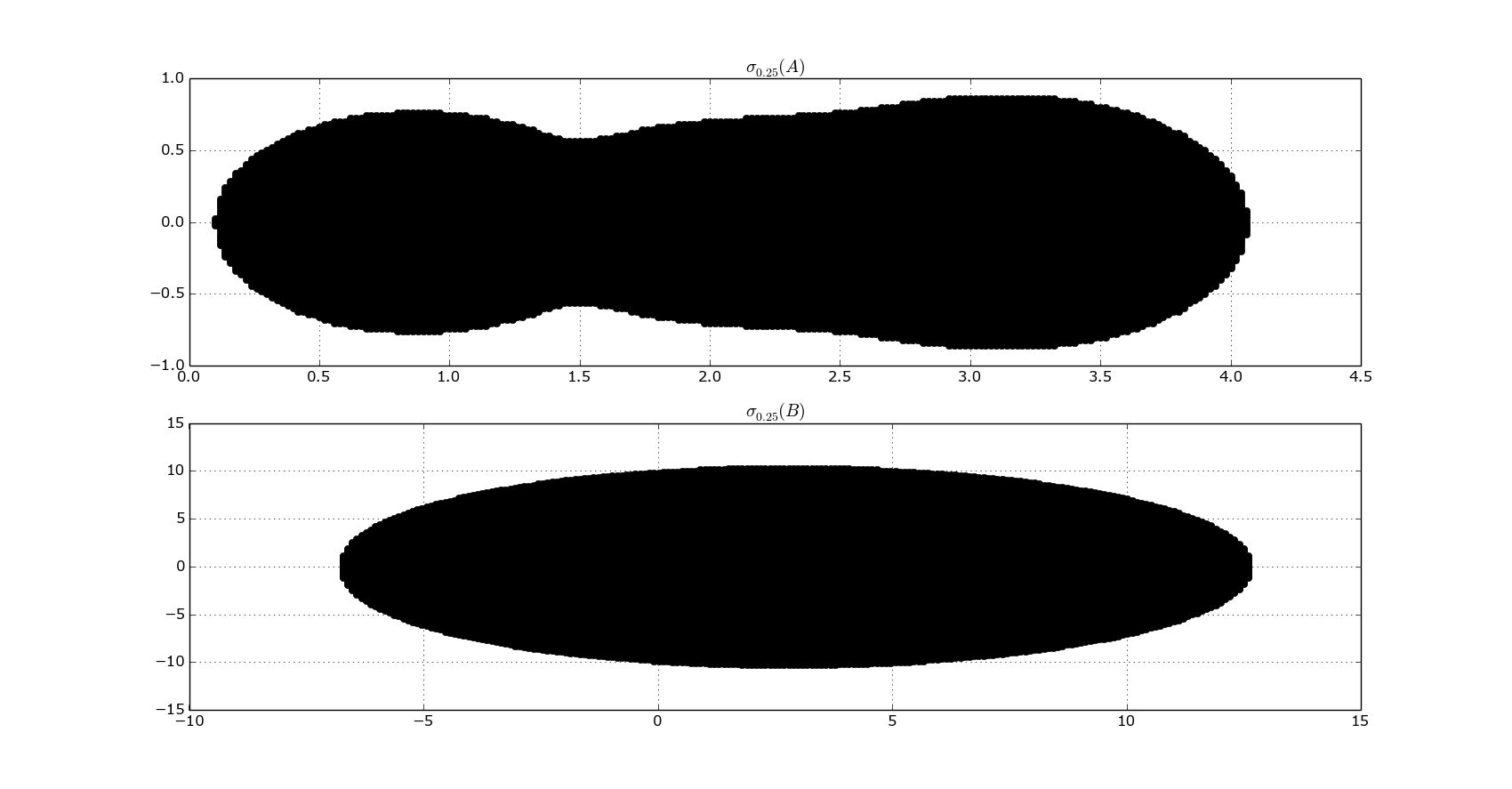}
\end{tabular}
\caption{}
\end{figure}
\end{center}
In Figure 3, the $\e$-condition pseudospectrum of $A$ and $B$ are found for $\e= 0.1, 0.15, 0.2$ and $0.25$. Then $\kappa(z,B)\leq \kappa(z,A)$ for every $z\in \C$ and $\sigma_\e(A)\subseteq \sigma_\e(B)$ for each $0< \e< 1$.
\section{The stability of linear operator equations}
\subsection{Perturbation of $y$} 
Let $A\in BL(X)$ and $Ax-zx= y$ where $x, y\in X$ and $z\in \C$. The following theorem finds an upper bound and lower bound for the relative perturbations of the solution $x$ from the condition pseudospectrum of $A$.
\begin{theorem}\label{perturbation1}
Let $A\in BL(X)$ and $Ax-zx= y$ where $x, y\in X$ and $z\notin \sigma(A)$. Further let $\delta x$ be the perturbation of the solution $x$ corresponding to a perturbation $\delta y$ of $y$. Then  
\[
\kappa(z,A) \, \frac{\|\delta y\|}{\|y\|}\leq \frac{\|\delta x\|}{\|x\|} \leq \frac{1}{\kappa(z,A)}\frac{\|\delta y\|}{\|y\|}
\]
where $\displaystyle \kappa(z,A):= \sup\{0<\e<1: z\notin \sigma_\e(A)\}.$
\end{theorem}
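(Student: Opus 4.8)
The plan is to reduce everything to the closed-form identity $\kappa(z,A) = 1/\bigl(\|A-zI\|\,\|(A-zI)^{-1}\|\bigr)$ and then to read off both inequalities from submultiplicativity of the operator norm.

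First I would compute $\kappa(z,A)$ explicitly. Writing $C := \|A-zI\|\,\|(A-zI)^{-1}\|$, the definition of the condition pseudospectrum gives $z\notin \sigma_\e(A)$ precisely when $z\notin\sigma(A)$ and $C < \e^{-1}$, i.e. $\e < 1/C$. Since $z\notin\sigma(A)$ is assumed and $C \geq \|(A-zI)(A-zI)^{-1}\| = 1$, the number $1/C$ lies in $(0,1]$, so the supremum defining $\kappa(z,A)$ is attained at $1/C$ and is not cut off by the constraint $\e<1$. Thus $\kappa(z,A)=1/C$, which converts the claimed inequalities into statements purely about $\|A-zI\|$ and $\|(A-zI)^{-1}\|$.

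Next I would set up the perturbation equation. Since $z\notin\sigma(A)$, the operator $A-zI$ is invertible, so $x=(A-zI)^{-1}y$ is the unique solution. Subtracting $(A-zI)x=y$ from $(A-zI)(x+\delta x)=y+\delta y$ yields $(A-zI)\delta x=\delta y$, equivalently $\delta x=(A-zI)^{-1}\delta y$. The four quantities $\|x\|,\|\delta x\|,\|y\|,\|\delta y\|$ are then related by two pairs of inequalities obtained by applying $A-zI$ and its inverse.

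For the upper bound I would use $\|\delta x\|\leq\|(A-zI)^{-1}\|\,\|\delta y\|$ together with $\|y\|\leq\|A-zI\|\,\|x\|$ (hence $1/\|x\|\leq\|A-zI\|/\|y\|$), multiply the two, and recognise the factor $C$. For the lower bound I would reverse the roles, using $\|\delta y\|\leq\|A-zI\|\,\|\delta x\|$ and $\|x\|\leq\|(A-zI)^{-1}\|\,\|y\|$ to produce the factor $1/C$. I do not expect a genuine obstacle here; the only point requiring care is the identification $\kappa(z,A)=1/C$, which rests on the observation that the condition number $C$ is always at least $1$, so that the supremum over $(0,1)$ equals $1/C$.
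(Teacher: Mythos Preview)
Your proposal is correct and follows essentially the same route as the paper: both arguments combine the four submultiplicativity estimates $\|x\|\le\|(A-zI)^{-1}\|\|y\|$, $\|y\|\le\|A-zI\|\|x\|$, $\|\delta x\|\le\|(A-zI)^{-1}\|\|\delta y\|$, $\|\delta y\|\le\|A-zI\|\|\delta x\|$ to obtain the bound with the condition number $C=\|A-zI\|\|(A-zI)^{-1}\|$. The only cosmetic difference is that you first identify $\kappa(z,A)=1/C$ in closed form and then substitute, whereas the paper keeps $\e$ as a parameter and passes to the supremum at the end; your explicit computation of $\kappa(z,A)$ is a mild clarification but not a different idea.
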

\begin{proof}
We have
 \[
  (A-zI)(x+ \delta x)= y+ \delta y.
 \]
Since $z\notin \sigma(A)$,
\begin{eqnarray}\label{eqn1}
\|x\|\leq \|(A-zI)^{-1}\| \|y\| \ \ \mbox{and} \ \ \|y\|\leq \|A-zI\|\|x\|.
\end{eqnarray}
Also
\begin{eqnarray}\label{eqn2}
\|\delta y\|\leq \|A-zI\| \|\delta x\| \ \ \mbox{and} \ \ \|\delta x\|\leq \|(A-zI)^{-1}\|\|\delta y\|.
\end{eqnarray}
From (\ref{eqn1}) and (\ref{eqn2}) it follows that
\[
 \frac{1}{\|(A-zI)\| \|(A-zI)^{-1}\|} \frac{\|\delta y\|}{\|y\|}\leq \frac{\|\delta x\|}{\|x\|} \leq \|A-zI\|\|(A-zI)^{-1}\|\frac{\|\delta y\|}{\|y\|}.
\]
If $z\notin \sigma_\e(A)$ for some $0<\e<1$, then
\[
 \e \, \frac{\|\delta y\|}{\|y\|}< \frac{\|\delta x\|}{\|x\|}< \frac{1}{\e}\frac{\|\delta y\|}{\|y\|}.
\]
Define $\displaystyle \kappa(z,A):= \sup\{0<\e<1: z\notin \sigma_\e(A)\}.$ Then 
\[
\kappa(z,A) \, \frac{\|\delta y\|}{\|y\|}\leq \frac{\|\delta x\|}{\|x\|} \leq \frac{1}{\kappa(z,A)}\frac{\|\delta y\|}{\|y\|}.
\]
\end{proof}
\begin{example}
Consider the operator equation $Ax-zx= y$ defined by
\[
A= \begin{bmatrix}
0&1&&&\\
\frac{1}{4}&0&1&&\\
&\ddots&\ddots&\ddots&\\
&&\frac{1}{4}&0&1\\
&&&\frac{1}{4}&0\\
\end{bmatrix}_{10\times 10}  \mbox{and} \ \ \ \ y= \begin{bmatrix}.
1\\1\\\vdots\\1\\1
\end{bmatrix}_{10\times 1}.
\]
The following table gives the relative perturbation of the solution $x$  for various values of $z$ and for $\delta y= \begin{bmatrix}0.1&0&0&0&0&0&0&0&0&0\end{bmatrix}^{T}$. The table also gives the upper bound and lower bound for the relative perturbation of the solution found from the condition pseudospectrum of $A$. 
\vspace{0.3 cm}
\begin{center}
\begin{tabular}{|c|c|c|c|c|}
\hline$z$&$\frac{\|\delta x\|}{\|x\|}$&$\kappa(z,A)$&$\kappa(z,A) \frac{\|\delta y\|}{\|y\|}$&$\frac{1}{\kappa(z,A)}\frac{\|\delta y\|}{\|y\|}$\\
\hline \hline $2$&0.01451478&0.25562528&0.00808358&0.12370754\\
\hline $2+i$&0.01916041&0.34480349&0.01090364&0.09171245\\
\hline $3-i$&0.02157139&0.22394260&0.00708168&0.14120929\\
\hline $1-i$&0.02192375&0.22394260&0.00708168&0.14120929\\
\hline $2-i$&0.01916041&0.34480349&0.01090364&0.09171245\\
\hline $3+3i$&0.02634769&0.62518073&0.01976995&0.05058181\\
\hline $4i$&0.03238426&0.69642891&0.02202301&0.04540704\\
\hline 
\end{tabular}
\end{center}
\end{example}
\subsection{Perturbation of $A$}
Let $A\in BL(X)$ and $Ax-zx= y$ where $x, y\in X$ and $z\in \C$. The following theorem finds an upper bound for $\frac{\|\delta x\|}{\|x\|}, \frac{\|\delta x\|}{\|x+\delta x\|}$ from the pseudospectrum of $A+\Delta A$, $A$ respectively. 
\begin{theorem}\label{perturbation2}
Let $A\in BL(X)$ and $Ax-zx= y$ where $x, y\in X$ and $z\notin \sigma(A)$. Further let $\delta x$ be the perturbation of the solution $x$ corresponding to the perturbation $\Delta A$ of $A$. Then 
\begin{enumerate}
 \item $\frac{\|\delta x\|}{\|x\|}\leq \frac{\|\Delta A\|}{\kappa_1(z,A+\Delta A)}$.
 \item $\frac{\|\delta x\|}{\|x+\delta x\|}\leq \frac{\|\Delta A\|}{\kappa_1(z,A)}$.
\end{enumerate}
where $\displaystyle \kappa_1(z,A):= \sup\{\e>0: z\notin \Lambda_\e(A)\}.$
\end{theorem}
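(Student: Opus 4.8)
The plan is to reduce everything to the resolvent norm by first recording the pseudospectral analogue of the relation used in the proof of Theorem \ref{perturbation1}. Since $z\notin\sigma(A)$, the resolvent $(A-zI)^{-1}$ exists, and by the definition of $\Lambda_\e(A)$ we have $z\notin\Lambda_\e(A)$ precisely when $\|(A-zI)^{-1}\|<\e^{-1}$, i.e.\ when $\e<1/\|(A-zI)^{-1}\|$. Taking the supremum over all such $\e$ yields
\[
\kappa_1(z,A)=\frac{1}{\|(A-zI)^{-1}\|}.
\]
This identity (and its version with $A$ replaced by $A+\Delta A$) is what converts every bound involving $\kappa_1$ into a bound on a resolvent norm, and it is the only place the pseudospectrum enters.

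Next I would write down the two governing equations. The unperturbed solution satisfies $(A-zI)x=y$, while the perturbed solution satisfies $(A+\Delta A-zI)(x+\delta x)=y$. Setting the left-hand sides equal and collecting the $\delta x$ terms gives a single identity that can be read in two ways, namely
\[
(A-zI)\,\delta x=-\Delta A\,(x+\delta x)\qquad\text{and equivalently}\qquad (A+\Delta A-zI)\,\delta x=-\Delta A\,x.
\]
The first form isolates $\delta x$ against $x+\delta x$ using the resolvent of $A$, and the second isolates $\delta x$ against $x$ using the resolvent of $A+\Delta A$; these two rearrangements produce assertions (2) and (1) respectively.

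For part (2), since $z\notin\sigma(A)$ I would apply $(A-zI)^{-1}$ to the first form and take norms, obtaining $\|\delta x\|\le\|(A-zI)^{-1}\|\,\|\Delta A\|\,\|x+\delta x\|$; dividing by $\|x+\delta x\|$ and invoking $\|(A-zI)^{-1}\|=1/\kappa_1(z,A)$ gives the stated bound $\frac{\|\delta x\|}{\|x+\delta x\|}\le\frac{\|\Delta A\|}{\kappa_1(z,A)}$. For part (1), I would apply $(A+\Delta A-zI)^{-1}$ to the second form and take norms, so that $\|\delta x\|\le\|(A+\Delta A-zI)^{-1}\|\,\|\Delta A\|\,\|x\|$; dividing by $\|x\|$ and using the identity with $A+\Delta A$ in place of $A$ yields $\frac{\|\delta x\|}{\|x\|}\le\frac{\|\Delta A\|}{\kappa_1(z,A+\Delta A)}$.

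The computations are routine once the identity $\kappa_1(z,A)=1/\|(A-zI)^{-1}\|$ is in place. The only point deserving care is the implicit solvability hypothesis in part (1): applying $(A+\Delta A-zI)^{-1}$ requires $z\notin\sigma(A+\Delta A)$. This is exactly the condition under which the perturbed equation admits a unique solution $x+\delta x$ and under which $\kappa_1(z,A+\Delta A)>0$ so that the right-hand side is finite; I would note it explicitly, since otherwise the second resolvent need not exist.
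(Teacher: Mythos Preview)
Your proof is correct and follows essentially the same approach as the paper: both derive the identity $(A-zI)\delta x+\Delta A(x+\delta x)=0$ from the perturbed equation, then apply the appropriate resolvent (of $A$ for part (2), of $A+\Delta A$ for part (1)) and take norms. The only cosmetic difference is that you compute $\kappa_1(z,A)=1/\|(A-zI)^{-1}\|$ explicitly at the outset, whereas the paper works with an arbitrary $\e$ satisfying $z\notin\Lambda_\e$ and passes to the supremum at the end; your explicit remark that part (1) implicitly requires $z\notin\sigma(A+\Delta A)$ is a welcome clarification the paper leaves tacit.
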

\begin{proof}
\begin{enumerate}
\item We have $(A-zI+ \Delta A)(x+ \delta x)= y.$ Then
\[
 (A-zI)\delta x+ \Delta A(x+ \delta x)= 0.
\]
If $z\notin \Lambda_\e(A+\Delta A)$ for some $\e>0$. Then $\|\delta x\|\leq \|(A+\Delta A-zI)^{-1}\|\|\Delta A\|\|x\|$ and $\frac{\|\delta x\|}{\|x\|}< \frac{\|\Delta A\|}{\e}.$  Define $\displaystyle \kappa_1(z,A+\Delta A):= \sup\{\e>0: z\notin \Lambda_\e(A+\Delta A)\}.$ Then 
\[
\frac{\|\delta x\|}{\|x\|}\leq \frac{\|\Delta A\|}{\kappa_1(z,A+\Delta A)}.
\]
\item If $z\notin \Lambda_\e(A)$ for some $\e>0$. Then $\|\delta x\|\leq \|(A-zI)^{-1}\|\|\Delta A\|\|x+\delta x\|$ and $\frac{\|\delta x\|}{\|x+\delta x\|}< \frac{\|\Delta A\|}{\e}.$  Define $\displaystyle \kappa_1(z,A):= \sup\{\e>0: z\notin \Lambda_\e(A)\}.$ Then
\[
\frac{\|\delta x\|}{\|x+\delta x\|}\leq \frac{\|\Delta A\|}{\kappa_1(z,A)}.
\]
\end{enumerate}
\end{proof}
\begin{lemma}\label{lem4}
Let $A\in BL(X)$. Then for every  $z\in \C$
\[
 \sup\{\|A-\lambda I\|: \lambda\in \sigma(A)\}\leq \|A-zI\|.
\]
\end{lemma}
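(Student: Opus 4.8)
The plan is to derive the inequality from the elementary estimate ``spectral radius $\le$ operator norm'' applied to the translated operator $A-zI$. Fix $z\in\C$ and take an arbitrary $\lambda\in\sigma(A)$. First I would record the translation property of the spectrum, namely $\sigma(A-zI)=\{\mu-z:\mu\in\sigma(A)\}$, which is immediate from the definition since $(A-zI)-(\mu-z)I=A-\mu I$ is invertible precisely when $A-\mu I$ is. In particular $\lambda-z\in\sigma(A-zI)$, so its modulus is at most the spectral radius of $A-zI$, which in turn is at most $\|A-zI\|$ in any Banach algebra. This yields $|\lambda-z|\le\|A-zI\|$, and taking the supremum over $\lambda\in\sigma(A)$ gives the clean uniform estimate $\sup\{|\lambda-z|:\lambda\in\sigma(A)\}\le\|A-zI\|$, valid for every $z\in\C$.

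The second step is to convert the scalar distance $|\lambda-z|$ into the operator norm $\|A-\lambda I\|$ that appears in the statement. Writing $A-\lambda I=(A-zI)+(z-\lambda)I$ and applying the triangle inequality gives $\|A-\lambda I\|\le\|A-zI\|+|z-\lambda|$, after which the estimate of the first step controls the term $|z-\lambda|$ by $\|A-zI\|$. Combining the two displays and passing to the supremum over $\lambda\in\sigma(A)$ is then the mechanical part of the argument.

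The step I expect to be the main obstacle is exactly this passage from $|\lambda-z|$ to $\|A-\lambda I\|$ with the sharp constant one. The triangle inequality as used above only produces $\|A-\lambda I\|\le 2\,\|A-zI\|$, so one cannot reach the stated bound by merely adding the two estimates; some genuine use of the fact that $\lambda$ lies in $\sigma(A)$ is required to remove the stray factor. The natural idea would be to select $\lambda$ on the boundary of the spectrum, where $\lambda$ belongs to the approximate point spectrum and an approximate eigenvector is available, and to compare $\|A-\lambda I\|$ against $\|A-zI\|$ directly through such a vector rather than through the triangle inequality. Carrying this comparison through without any loss of constant is the delicate point, and I would expect it to force either an extra hypothesis on $z$ (for instance $z$ not lying strictly inside the convex hull of $\sigma(A)$) or a reformulation of the right-hand side, since for an operator whose spectrum contains two well-separated points the quantity $\|A-zI\|$ can dip strictly below $\sup\{\|A-\lambda I\|:\lambda\in\sigma(A)\}$ at an intermediate value of $z$.
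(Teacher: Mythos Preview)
Your approach is exactly the paper's: establish $|\lambda - z| \le \|A-zI\|$ via the spectral-radius bound for $A-zI$, then apply the triangle inequality to $A-\lambda I = (A-zI) + (z-\lambda)I$. The paper's own proof stops precisely at the line $\|A-\lambda I\| \le 2\|A-zI\|$, so the displayed statement of the lemma is evidently a typo: the intended (and proved) inequality carries a factor $2$ on the right-hand side, and every subsequent application in the paper (Theorem~\ref{perturbation3} and Lemma~\ref{lem2}) uses the bound with that factor.

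Your suspicion that the constant-$1$ version fails is correct, and the counterexample scenario you describe works. Take $A=\begin{bmatrix}1&0\\0&-1\end{bmatrix}$ on $\C^2$ with the Euclidean norm: then $\sigma(A)=\{1,-1\}$ and $\|A-\lambda I\|=2$ for each $\lambda\in\sigma(A)$, while $\|A-0\cdot I\|=\|A\|=1$, so the inequality with constant $1$ is violated at $z=0$. There is therefore nothing further to repair in your argument: you have reproduced the paper's reasoning and correctly diagnosed that the lemma as printed is misstated.
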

\begin{proof}
If $\lambda\in \sigma(A)$ and $z\in \C$. Then $\lambda- z\in \sigma(A-zI)$ and $|\lambda- z|\leq \|A-zI\|$. Also
\[
 \|A-\lambda I\|= \|A-zI+zI-\lambda I\|\leq \|A-zI\|+ |\lambda-z|\leq 2\|A-zI\|.
\]
\end{proof}

The following theorem finds an upper bound for $\frac{\|\delta x\|}{\|x\|}, \frac{\|\delta x\|}{\|x+\delta x\|}$ from the condition pseudospectrum of $A+\Delta A, A$ respectively. 
\begin{theorem}\label{perturbation3}
Let $A\in BL(X)$ and $Ax-zx= y$ where $x, y\in X$ and $z\notin \sigma(A)$. Further let $\delta x$ be a perturbation of the solution $x$ corresponding to the perturbation $\Delta A$ of $A$.  Define $\displaystyle \kappa(z,A):= \sup\{0<\e<1: z\notin \sigma_\e(A)\}.$ Then 
\begin{enumerate}
 \item $\frac{\|\delta x\|}{\|x\|}\leq \frac{2\|\Delta A\|}{M_1 \, \kappa(z,A+\Delta A)}$ where $M_1:= \sup\{\|A+\Delta A- \lambda I\|: \lambda\in \sigma(A+\Delta A)\}$.
 \item $\frac{\|\delta x\|}{\|x+\delta x\|}\leq \frac{2\|\Delta A\|}{M_2 \, \kappa(z,A)}$ where $M_2:= \sup\{\|A- \lambda I\|: \lambda\in \sigma(A)\}$.
\end{enumerate}
\end{theorem}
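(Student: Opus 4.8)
The plan is to run the same two operator identities that drove the proof of Theorem \ref{perturbation2}, and then trade the resolvent bound coming from the pseudospectrum for one coming from the condition pseudospectrum. From $(A-zI+\Delta A)(x+\delta x)= y = (A-zI)x$ one obtains, exactly as before, the two relations
\[
(A+\Delta A - zI)\delta x = -\Delta A\, x \quad\text{and}\quad (A-zI)\delta x = -\Delta A(x+\delta x),
\]
hence the norm inequalities $\|\delta x\|\le \|(A+\Delta A - zI)^{-1}\|\,\|\Delta A\|\,\|x\|$ for part (1) and $\|\delta x\|\le \|(A-zI)^{-1}\|\,\|\Delta A\|\,\|x+\delta x\|$ for part (2).

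Next I would inject the condition pseudospectrum. For part (2), if $z\notin \sigma_\e(A)$ for some $0<\e<1$ then $\|A-zI\|\,\|(A-zI)^{-1}\| < \e^{-1}$, so $\|(A-zI)^{-1}\| < \frac{1}{\e\,\|A-zI\|}$; substituting gives $\frac{\|\delta x\|}{\|x+\delta x\|} < \frac{\|\Delta A\|}{\e\,\|A-zI\|}$. The analogous computation for part (1), using $z\notin\sigma_\e(A+\Delta A)$, yields $\frac{\|\delta x\|}{\|x\|} < \frac{\|\Delta A\|}{\e\,\|A+\Delta A - zI\|}$.

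The step that makes the stated bound appear is controlling $\|A-zI\|$ from below by $M_2$ (and $\|A+\Delta A-zI\|$ from below by $M_1$). This is where Lemma \ref{lem4} enters: its proof establishes $\|A-\lambda I\|\le 2\|A-zI\|$ for every $\lambda\in\sigma(A)$, whence $M_2 = \sup\{\|A-\lambda I\|:\lambda\in\sigma(A)\} \le 2\|A-zI\|$, i.e.\ $\|A-zI\|\ge \tfrac{1}{2} M_2$, and likewise $\|A+\Delta A - zI\|\ge \tfrac{1}{2} M_1$. Feeding these into the two displayed inequalities replaces the denominators and produces the factor $2$ in the numerator, namely $\frac{\|\delta x\|}{\|x\|} < \frac{2\|\Delta A\|}{\e\,M_1}$ and $\frac{\|\delta x\|}{\|x+\delta x\|} < \frac{2\|\Delta A\|}{\e\,M_2}$.

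Finally I would take the supremum over the admissible $\e$: with $\kappa(z,A+\Delta A)$ and $\kappa(z,A)$ as defined, passing to the supremum turns the strict inequalities into the asserted bounds. The only real obstacle is bookkeeping the factor $2$: one must read Lemma \ref{lem4} through its proof, which actually delivers $\|A-\lambda I\|\le 2\|A-zI\|$, since it is exactly this doubling that accounts for the constant $2$ distinguishing this theorem from Theorem \ref{perturbation2}. Everything else is a direct transcription of the earlier argument with the resolvent norm rewritten via the condition pseudospectrum.
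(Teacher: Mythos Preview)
Your proposal is correct and follows essentially the same route as the paper: derive the two identities $(A+\Delta A-zI)\delta x=-\Delta A\,x$ and $(A-zI)\delta x=-\Delta A(x+\delta x)$, bound the resolvent norm via the condition pseudospectrum, and then invoke Lemma~\ref{lem4} (reading its proof to get the actual inequality $\|A-\lambda I\|\le 2\|A-zI\|$) to introduce the factor $2/M_i$ before passing to the supremum over $\e$. Your write-up is in fact slightly cleaner than the paper's, which displays only the second identity but tacitly uses the first for part~(1).
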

\begin{proof}
\begin{enumerate}
\item We have $(A-zI+ \Delta A)(x+ \delta x)= y$ and
\[
(A-zI)\delta x+ \Delta A(x+ \delta x)= 0.
\]
If $z\notin \sigma_\e(A+\Delta A)$ for some $0<\e<1$. Then
\[\|\delta x\|\leq \frac{\|A+\Delta A-zI\|\|(A+\Delta A-zI)^{-1}\|\|\Delta A\|\|x\|}{\|A+\Delta A-zI\|}.\]
From Lemma \ref{lem4},
\[
 \frac{\|\delta x\|}{\|x\|}\leq \frac{2\|\Delta A\|}{M_1 \, \e}
\]
where $M_1= \sup\{\|A+\Delta A- \lambda I\|: \lambda\in \sigma(A+\Delta A)\}$. Hence 
\[
\frac{\|\delta x\|}{\|x\|}\leq \frac{2\|\Delta A\|}{M_1 \, \kappa(z,A+\Delta A)}
\] 
where $\kappa(z,A+\Delta A)$ is defined above.
\item If $z\notin \sigma_\e(A)$ for some $0<\e<1$. Then
\[
\|\delta x\|\leq \frac{\|A-zI\|\|(A-zI)^{-1}\|\|\Delta A\|\|x+\delta x\|}{\|A-zI\|}.
\]
From Lemma \ref{lem4},
\[
 \frac{\|\delta x\|}{\|x+\delta x\|}\leq \frac{2\|\Delta A\|}{M_2 \ \e}
\]
where $M_2= \max\{\|A- \lambda I\|: \lambda\in \sigma(A)\}$. Hence  
\[
\frac{\|\delta x\|}{\|x+\delta x\|}\leq \frac{2\|\Delta A\|}{M_2 \, \kappa(z,A)}
\]
where $\kappa(z,A)$ is defined above.
\end{enumerate}
\end{proof}
\begin{example}
Consider the operator equation $Ax-zx= y$ defined by $A= \begin{bmatrix}1&-6&7&-9\\1&-5&0&0\\0&1&-5&0\\0&0&1&-5\end{bmatrix}$ and $y= \begin{bmatrix}1&1&1&1\end{bmatrix}^T$. Let $\Delta A= \begin{bmatrix}
 -0.01&0&0&0\\0&-0.01&0&0\\0&0&0&0\\0&0&0&0\end{bmatrix}$ be a small perturbation on $A$. Then
 \[
 \sigma(A)= \{0.00964896, -3.72221248, -5.14371824+1.17699479\, i, -5.14371824-1.17699479\; i \},
 \]
 \[
  \sigma(A+\Delta A)= \{-2.40617352\times e^{-5}, -3.726616, -5.14667997+1.17985088\, i, -5.14667997-1.17985088\, i\}.
 \]
The following table gives the estimates of $\frac{\|\delta x\|}{\|x+\delta x\|}$ for various values of $z$. The table also gives the upper bound for $\frac{\|\delta x\|}{\|x+\delta x\|}$ found from the pseudospectrum and condition pseudospectrum of $A$.
\begin{center}
\begin{tabular}{|c|c|c|c|}
\hline $z$&$\frac{\|\delta x\|}{\|x+\delta x\|}$&$ \frac{\|\Delta A\|}{\kappa_1(z,A)}$&$ \frac{2\|\Delta A\|}{M_2 \, \kappa(z,A)}$\\
\hline\hline $0$&1.00250311&3.20374633&6.28729589\\
\hline $0.001+0.001i$&1.11144480&3.54979202&6.96656727\\
\hline $0.1$&0.11116547&0.33536750&0.65971972\\
\hline $-3.5$&0.01890969&0.10268623&0.19736876\\
\hline $-5+i$&0.02005952&0.06879138&0.13682253\\
\hline $1+i$&0.00711180&0.01821627&0.03689838\\
\hline $10+10i$&0.00039271&0.00096510&0.00324741\\
\hline  
\end{tabular}
\end{center}
The following table gives the estimates of the relative perturbation of the solution $x$ for various values of $z$. The table also gives the upper bound for $\frac{\|\delta x\|}{\|x\|}$ found from the pseudospectrum and condition pseudospectrum of $A$.
\begin{center} 
\begin{tabular}{|c|c|c|c|}
\hline $z$&$\frac{\|\delta x\|}{\|x\|}$&$\frac{\|\Delta A\|}{\kappa_1(z,A+\Delta A)}$&$\frac{2\|\Delta A\|}{M_1 \ \kappa(z,A+\Delta A)}$\\
\hline\hline $0$&400.49840503&1283.50359817&2519.32007747\\
\hline $0.001+0.001i$&6.73531833&21.57230484&42.34410702\\
\hline $0.1$&0.10005389&0.30265747&0.59548750\\
\hline $-3.5$&0.01856115&0.10081974&0.19377512\\
\hline $-5+i$&0.01989405&0.06762120&0.13448096\\
\hline $1+i$&0.00707485&0.01811715&0.03670612\\
\hline $10+10i$&0.00039262&0.00096463&0.00324679\\
\hline 
\end{tabular}
\end{center}
\end{example}
\subsection{Perturbation of both $A$ and $y$}
Let $A\in BL(X)$ and $Ax-zx= y$ where $x, y\in X$ and $z\in \C$. The following theorem finds an upper bound for the relative perturbations of the solution $x$ corresponding to a perturbation of both $A$ and $y$ from the condition pseudospectrum of $A$. 
\begin{theorem}\label{perturbation4}
Let $Ax-zx= y$ where $A\in BL(X)$, $x, y\in X$ and $z\notin \sigma(A)$. Further let $\delta x$ be the perturbation of the solution $x$ corresponding to the perturbation $\Delta A, \delta y$ on $A, y$ respectively. Then
\[
  \frac{\|\delta x\|}{\|x\|}\leq \frac{1}{\kappa(z,A)[1-\|(A-zI)^{-1}\Delta A\|]}\left(\frac{\|\delta y\|}{\|y\|}+ \frac{\|\Delta A\|}{\|A-zI\|}\right)
\]
where $\displaystyle \kappa(z,A):= \sup\{0<\e<1: z\notin \sigma_\e(A)\}.$
\end{theorem}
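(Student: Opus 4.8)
The plan is to start, exactly as in the proofs of Theorems \ref{perturbation2} and \ref{perturbation3}, by subtracting the unperturbed equation $(A-zI)x=y$ from the perturbed equation $(A-zI+\Delta A)(x+\delta x)=y+\delta y$. Expanding and cancelling $(A-zI)x=y$ leaves $(A-zI)\delta x+\Delta A(x+\delta x)=\delta y$, which I rearrange as $[(A-zI)+\Delta A]\delta x=\delta y-\Delta A\,x$. Since $z\notin\sigma(A)$, I may apply $(A-zI)^{-1}$ on the left to obtain
\[
[I+(A-zI)^{-1}\Delta A]\,\delta x = (A-zI)^{-1}\delta y - (A-zI)^{-1}\Delta A\, x .
\]

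The key technical step is to invert the operator $I+(A-zI)^{-1}\Delta A$. Assuming $\|(A-zI)^{-1}\Delta A\|<1$ (which is precisely what makes the factor $1-\|(A-zI)^{-1}\Delta A\|$ in the statement positive), a Neumann-series argument gives $\|[I+(A-zI)^{-1}\Delta A]^{-1}\|\leq (1-\|(A-zI)^{-1}\Delta A\|)^{-1}$. Applying this to the displayed identity and using the triangle inequality together with submultiplicativity yields
\[
\|\delta x\| \leq \frac{\|(A-zI)^{-1}\|}{1-\|(A-zI)^{-1}\Delta A\|}\bigl(\|\delta y\| + \|\Delta A\|\,\|x\|\bigr).
\]

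From here the argument follows the template of Theorem \ref{perturbation1}: I divide by $\|x\|$ and then convert $\|(A-zI)^{-1}\|$ into condition-number language. If $z\notin\sigma_\e(A)$ for some $0<\e<1$, then $\|A-zI\|\,\|(A-zI)^{-1}\|<\e^{-1}$, so $\|(A-zI)^{-1}\|<(\e\,\|A-zI\|)^{-1}$; substituting this produces a bound with $\e\,\|A-zI\|$ in the denominator. I then split the factor $1/\|A-zI\|$ between the two terms, and the lower bound $\|y\|\leq\|A-zI\|\,\|x\|$ from (\ref{eqn1}) converts $\|\delta y\|/(\|A-zI\|\,\|x\|)$ into $\|\delta y\|/\|y\|$. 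Passing to the supremum over all such $\e$ replaces $\e$ by $\kappa(z,A)$ and turns the strict inequality into the claimed $\leq$.

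The main obstacle is really just the hypothesis $\|(A-zI)^{-1}\Delta A\|<1$ hidden in the statement: without it the Neumann inversion fails and the right-hand side loses meaning, so I would flag this explicitly as the regime in which the estimate is asserted (it is the natural smallness condition on $\Delta A$ guaranteeing that $A-zI+\Delta A$ remains invertible). Everything else is the same bookkeeping — triangle inequality, submultiplicativity, and the passage from $\e$ to $\kappa(z,A)$ — already rehearsed in the earlier theorems.
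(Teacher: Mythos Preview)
Your argument is correct and leads to the same inequality. The only difference from the paper's proof is one of packaging: the paper obtains the intermediate estimate
\[
  \frac{\|\delta x\|}{\|x\|}\leq \frac{\|A-zI\|\|(A-zI)^{-1}\|}{1-\|(A-zI)^{-1}\Delta A\|}\left(\frac{\|\delta y\|}{\|y\|}+ \frac{\|\Delta A\|}{\|A-zI\|}\right)
\]
by citing Theorem 7.12 of \cite{book1} as a black box, whereas you derive it from scratch via the Neumann-series inversion of $I+(A-zI)^{-1}\Delta A$ together with $\|y\|\leq\|A-zI\|\,\|x\|$. After that point both proofs proceed identically: replace $\|A-zI\|\,\|(A-zI)^{-1}\|$ by $\e^{-1}$ whenever $z\notin\sigma_\e(A)$ and pass to the supremum to obtain $\kappa(z,A)$. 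Your version is more self-contained and makes explicit the hypothesis $\|(A-zI)^{-1}\Delta A\|<1$ that the paper leaves implicit in the positivity of the denominator; the paper's version is shorter but relies on an external reference.
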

\begin{proof}
We have 
\[
(A-zI+ \Delta A)(x+\delta x)= y+ \delta y.
\]
From Theorem 7.12 of \cite{book1},
\[
  \frac{\|\delta x\|}{\|x\|}\leq \frac{\|A-zI\|\|(A-zI)^{-1}\|}{1-\|(A-zI)^{-1}\Delta A\|}\left(\frac{\|\delta y\|}{\|y\|}+ \frac{\|\Delta A\|}{\|A-zI\|}\right). 
\]
Suppose $z\notin \sigma_\e(A)$ for some $0<\e<1$. Then
\[
  \frac{\|\delta x\|}{\|x\|}< \frac{1}{\e[1-\|(A-zI)^{-1}\Delta A\|]}\left(\frac{\|\delta y\|}{\|y\|}+ \frac{\|\Delta A\|}{\|A-zI\|}\right).
\]
Hence 
\[
  \frac{\|\delta x\|}{\|x\|}\leq \frac{1}{\kappa(z,A)[1-\|(A-zI)^{-1}\Delta A\|]}\left(\frac{\|\delta y\|}{\|y\|}+ \frac{\|\Delta A\|}{\|A-zI\|}\right)
\]
where $\displaystyle \kappa(z,A):= \sup\{0<\e<1: z\notin \sigma_\e(A)\}.$
\end{proof}
The following table gives the estimates of the relative perturbation of the solution $x$ for various values of $z$. The table also gives the upper bound for $\frac{\|\delta x\|}{\|x\|}$ found from the condition pseudospectrum of $A$.
\begin{example}
Consider the operator equation $Ax-zx= y$ defined by $A= \begin{bmatrix}0&1&2&3&4&5\\1&0&1&2&3&4\\2&1&0&1&2&3\\3&2&1&0&1&2\\ 4&3&2&1&0&1\\ 5&4&3&2&1&0\end{bmatrix}$ and $y= \begin{bmatrix}1\\2\\3\\4\\5\\6\end{bmatrix}^T$. Let $\Delta A= \begin{bmatrix}
 -0.01&0&0&0&0&0\\0&-0.01&0&0&0&0\\0&0&0&0&0&0\\0&0&0&0&0&0\\0&0&0&0&0&0\\0&0&0&0&0&0\\\end{bmatrix}$, $\delta y= \begin{bmatrix}0.01\\0.02\\0.03\\0.04\\0\\0\end{bmatrix}$ be small perturbations on $A$, $y$ respectively. Then
\begin{center}
 \begin{tabular}{|c|c|c|}
 \hline  $z$& $\frac{\|\delta x\|}{\|x\|}$& $ \frac{1}{\kappa(z,A)[1-\|(A-zI)^{-1}\Delta A\|]}\left(\frac{\|\delta y\|}{\|y\|}+ \frac{\|\Delta A\|}{\|A-zI\|}\right)$\\
 \hline \hline 0&0.0268467778225&0.150444148229\\ 
 \hline 0.2&0.0222940217983&0.107632701309\\
 \hline 0.4&0.0193775211883&0.0832300065481\\
 \hline 0.6&0.0173217152417&0.0674667943276\\
 \hline -0.1&0.0302644475105&0.186762288147\\
 \hline -0.3&0.0422038073974&0.353665883561\\
 \hline -0.5&0.0572957745312&2.62816178085\\
 \hline 
 \end{tabular}
\end{center}
\end{example}
\section{Characterization of the distance to instability of an operator from the condition pseudospectrum}
\begin{definition}
Let $A\in BL(X)$. Then $A$ is said to be stable if 
\[
\sigma(A)\cap \{z: \mbox{Re}\,z> 0\}= \emptyset.
\]
\end{definition}
\begin{definition}
Let $A\in BL(X)$. The distance to instability of $A$ is denoted by $d_1(A)$ and is defined as
\begin{eqnarray*}
d_1(A)&:=& \min\{\|E\|: A+E \ \mbox{is not stable}\},\\
&=& \min\{\|E\|: \sigma(A+E)\cap \{z: \mbox{Re}\, z >0\}\neq \emptyset\}.
\end{eqnarray*}
\end{definition}
The following lemmas find the relation connecting the pseudospectrum and the condition pseudospectrum of an operation $A\in BL(X)$.
\begin{lemma}\label{lem2}
Let $A\in BL(X)$ and $\e> 0$. Then $\displaystyle \Lambda_\e(A)\subseteq \sigma_{\frac{2\e}{M(A)}}(A)$ where $M(A):= \sup\{\|A-\lambda I\|: \lambda\in \sigma(A)\}$.
\end{lemma}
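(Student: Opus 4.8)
The plan is to prove the inclusion pointwise, splitting on whether $z$ lies in the spectrum. Fix $z\in\Lambda_\e(A)$ and set $\delta:=\frac{2\e}{M(A)}$; the goal is to show $z\in\sigma_\delta(A)$. If $z\in\sigma(A)$ there is nothing to do, since $\sigma(A)\subseteq\sigma_\delta(A)$ for every parameter. So I would assume $z\notin\sigma(A)$, in which case membership of $z$ in $\Lambda_\e(A)$ forces
\[
\|(A-zI)^{-1}\|\geq\e^{-1},
\]
and this is the only property of $z$ that the argument will use.

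Next I would bound $\|A-zI\|$ from below. The proof of Lemma \ref{lem4} establishes that
\[
M(A)=\sup\{\|A-\lambda I\|:\lambda\in\sigma(A)\}\leq 2\|A-zI\|\quad\text{for every }z\in\C,
\]
equivalently $\|A-zI\|\geq\tfrac{1}{2}M(A)$. Combining this with the pseudospectral inequality $\|(A-zI)^{-1}\|\geq\e^{-1}$ and multiplying the two estimates yields
\[
\|A-zI\|\,\|(A-zI)^{-1}\|\geq\frac{M(A)}{2\e}=\delta^{-1}.
\]
By the definition of the condition pseudospectrum this is precisely the statement $z\in\sigma_\delta(A)$, which closes the inclusion $\Lambda_\e(A)\subseteq\sigma_{2\e/M(A)}(A)$.

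I do not anticipate a genuine obstacle here: the whole argument is a single chain of submultiplicative and triangle-inequality estimates, and the real work is already contained in Lemma \ref{lem4}. The only point deserving care is the constant, since the factor $2$ in the target parameter is inherited directly from the factor $2$ in Lemma \ref{lem4} (which itself comes from $\|A-\lambda I\|\leq\|A-zI\|+|\lambda-z|\leq 2\|A-zI\|$, using $|\lambda-z|\leq\|A-zI\|$ for $\lambda\in\sigma(A)$). It is also worth remarking that the degenerate case $\delta=\frac{2\e}{M(A)}\geq 1$ is automatically covered: there $\delta^{-1}\leq 1\leq\|A-zI\|\,\|(A-zI)^{-1}\|$ for all $z\notin\sigma(A)$, so $\sigma_\delta(A)=\C$ and the inclusion is trivial.
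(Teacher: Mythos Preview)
Your proof is correct and follows essentially the same route as the paper: both arguments take $z\in\Lambda_\e(A)$, use the bound $\|A-zI\|\geq M(A)/2$ coming from Lemma~\ref{lem4}, and multiply it against $\|(A-zI)^{-1}\|\geq\e^{-1}$ to obtain $\|A-zI\|\,\|(A-zI)^{-1}\|\geq M(A)/(2\e)$. Your explicit case split on $z\in\sigma(A)$ and the remark about the degenerate case $\delta\geq 1$ are additional bookkeeping that the paper leaves implicit, but the substance is identical.
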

\begin{proof}
Suppose $z\in \Lambda_\e(A)$ for some $\e>0$. Then $\displaystyle \|(A-zI)^{-1}\|\geq \frac{1}{\e}$ and 
\[
\|A-zI\| \|(A-zI)^{-1}\|\geq \frac{\|A-zI\|}{\e}.
\]
From Lemma \ref{lem4}, 
\[
\|A-zI\| \|(A-zI)^{-1}\|\geq \frac{M(A)}{2\e}
\]
where $M(A)$ is defined above. Hence $\displaystyle z\in \sigma_{\frac{2\e}{M(A)}}(A)$.
\end{proof}
\begin{lemma}\label{lem3}
Let $A\in BL(X)$ and $\e> 0$. Then $\sigma_{\frac{\e}{\e+2\|A\|}}(A)\subseteq \Lambda_\e(A).$
\end{lemma}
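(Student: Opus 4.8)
The plan is to prove the inclusion directly: take $z\in\sigma_{\delta}(A)$ with $\delta=\frac{\e}{\e+2\|A\|}$ and show $z\in\Lambda_\e(A)$. If $z\in\sigma(A)$ the conclusion is immediate, since $\sigma(A)\subseteq\Lambda_\e(A)$; so assume $z\notin\sigma(A)$, in which case the membership $z\in\sigma_\delta(A)$ unwinds to the inequality
\[
\|A-zI\|\,\|(A-zI)^{-1}\|\ge \delta^{-1}=1+\frac{2\|A\|}{\e}.
\]
Writing $s=\|A-zI\|$ and $r=\|(A-zI)^{-1}\|$, the goal reduces to the following implication: from $sr\ge 1+\frac{2\|A\|}{\e}$ deduce $r\ge \e^{-1}$, which is exactly the condition for $z\in\Lambda_\e(A)$.

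The crux is an a priori \emph{upper} bound on $\|A-zI\|$ in terms of $\|A\|$ and $\|(A-zI)^{-1}\|$, namely the claim
\[
\|A-zI\|\le 2\|A\|+\frac{1}{\|(A-zI)^{-1}\|}.
\]
I would prove this by passing to the minimum modulus of the invertible operator $T=A-zI$. Using the identity $\frac{1}{\|T^{-1}\|}=\inf_{\|v\|=1}\|Tv\|$ (valid for invertible $T$, obtained by substituting $w=Tv$ in $\|T^{-1}\|=\sup_{w\neq 0}\|T^{-1}w\|/\|w\|$) together with the reverse triangle inequality $\|Av-zv\|\ge |z|-\|Av\|\ge |z|-\|A\|$ for unit vectors $v$, one gets $\frac{1}{\|T^{-1}\|}\ge |z|-\|A\|$, that is, $|z|\le \|A\|+\frac{1}{\|(A-zI)^{-1}\|}$; combining this with $\|A-zI\|\le \|A\|+|z|$ yields the claim. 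This is the step I expect to be the main obstacle, because the naive bound $\|A-zI\|\le 2\|A\|+\operatorname{dist}(z,\sigma(A))$ coming from Lemma \ref{lem4} is too weak: it produces $\operatorname{dist}(z,\sigma(A))$ where I need $1/\|(A-zI)^{-1}\|$, and for non-normal operators these two quantities genuinely differ.

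With the claim in hand the remainder is algebra: the claim rewrites as $sr\le 1+2\|A\|\,r$, so chaining with the hypothesis gives $1+\frac{2\|A\|}{\e}\le sr\le 1+2\|A\|\,r$; cancelling the $1$ and dividing by $2\|A\|>0$ produces $r\ge \e^{-1}$, as required, so $z\in\Lambda_\e(A)$. I would record the standing assumption $A\neq 0$, which guarantees $0<\delta<1$ (so that $\sigma_\delta(A)$ is defined) and legitimizes the division by $2\|A\|$; the degenerate case $A=0$ forces $\delta=1$, which lies outside the range $0<\e<1$ on which the condition pseudospectrum is defined and hence need not be treated.
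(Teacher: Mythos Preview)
Your proof is correct. The paper's argument follows the same overall shape---bound $\|A-zI\|$ from above, combine with the hypothesis $\|A-zI\|\,\|(A-zI)^{-1}\|\ge(\e+2\|A\|)/\e$ to force $\|(A-zI)^{-1}\|\ge\e^{-1}$---but it obtains the bound on $\|A-zI\|$ by a different route: it cites Theorem~2.9 of \cite{kul}, which says that $z\in\sigma_\delta(A)$ implies $|z|\le\frac{1+\delta}{1-\delta}\|A\|$. For $\delta=\frac{\e}{\e+2\|A\|}$ this simplifies to $|z|\le\e+\|A\|$, hence $\|A-zI\|\le\e+2\|A\|$, and dividing gives $\|(A-zI)^{-1}\|\ge\e^{-1}$ directly. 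Your minimum-modulus argument $|z|\le\|A\|+1/\|(A-zI)^{-1}\|$ is in fact the core inequality underlying that cited theorem (the Neumann-series bound $\|(A-zI)^{-1}\|\le 1/(|z|-\|A\|)$), so you have essentially unpacked the citation inline and then solved the resulting inequality $sr\le 1+2\|A\|r$ for $r$. The payoff of your version is self-containment---no appeal to \cite{kul}---at the cost of a slightly longer chain of inequalities; the paper's version is shorter but hides the mechanism in the reference. Your remark that the statement tacitly assumes $A\neq 0$ (so that $\delta<1$) is a valid observation the paper leaves implicit.
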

\begin{proof}
Suppose $\displaystyle z\in \sigma_{\frac{\e}{\e+2\|A\|}}(A)$ for some $\e>0$. Then
\[
\|A-zI\| \|(A-zI)^{-1}\|\geq \frac{\e+2\|A\|}{\e}.
\]
\[
\|(A-zI)^{-1}\|\geq \frac{\e+2\|A\|}{\e\|A-zI\|}\geq \frac{\e+2\|A\|}{\e(|z|+\|A\|)}.
\]
Since $\displaystyle |z|\leq \frac{1+\frac{\e}{\e+2\|A\|}}{1- \frac{\e}{\e+2\|A\|}}\|A\|$ (Theorem 2.9 of \cite{kul}), we have 
\[
\|(A-zI)^{-1}\|\geq \frac{\e+2\|A\|}{\e\left(\frac{1+\frac{\e}{\e+2\|A\|}}{1- \frac{\e}{\e+2\|A\|}}\|A\|+\|A\|\right)}= \frac{1}{\e}.
\]
\end{proof}
The following theorem finds an upper bound and lower bound to the distance to singularity of an operator $A\in BL(X)$ from the condition pseudospectrum of $A$.
\begin{theorem}
 Let $A\in BL(X)$. Define $M(A):= \sup\{\|A-\lambda I\|: \lambda\in \sigma(A)\}$, then
 \[
 d_1(A)\geq \max\left\{\e>0: \sigma_{\frac{2\e}{M(A)}}(A)\cap \{z: \textnormal{Re}\, z>0\}= \emptyset \right\},
 \]
\[
d_1(A)\leq \max\left\{\e>0: \sigma_{\frac{\e}{\e+2\|A\|}}(A) \cap \{z: \textnormal{Re}\, z>0\}= \emptyset\right\}.
\]
\end{theorem}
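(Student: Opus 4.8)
The plan is to route the whole estimate through the classical pseudospectral characterisation of the distance to instability and then to transfer it to the condition pseudospectrum by the set inclusions of Lemmas \ref{lem2} and \ref{lem3}. Concretely, I would first isolate the auxiliary identity
\[
d_1(A)= g(A), \qquad g(A):= \sup\{\e>0: \Lambda_\e(A)\cap \{z: \textnormal{Re}\, z>0\}= \emptyset\},
\]
which expresses that the resolvent norm controls exactly how far one must perturb $A$ before an eigenvalue crosses into the open right half-plane. Assuming $A$ is stable (if it is not, then $\sigma(A)\subseteq\sigma_\e(A)$ meets the right half-plane for every admissible $\e$, both sides degenerate and the statement is trivial), the two displayed inequalities then reduce to comparing $g(A)$ with the two condition-pseudospectral quantities.

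For the lower bound $d_1(A)\geq g(A)$, I would take any $E$ with $A+E$ not stable, pick $z$ with $\textnormal{Re}\,z>0$ and $z\in\sigma(A+E)$, note $z\notin\sigma(A)$, and use the factorisation $A+E-zI=(A-zI)\bigl(I+(A-zI)^{-1}E\bigr)$: since the left side is singular, $I+(A-zI)^{-1}E$ is singular, so a Neumann-series argument forces $\|(A-zI)^{-1}E\|\geq 1$ and hence $\|E\|\geq 1/\|(A-zI)^{-1}\|$. Because $z$ lies in the open right half-plane while $\Lambda_\e(A)$ misses it for every $\e<g(A)$, we have $\|(A-zI)^{-1}\|<\e^{-1}$, i.e. $\|E\|>\e$, for all such $\e$; letting $\e\uparrow g(A)$ gives $\|E\|\geq g(A)$, whence $d_1(A)\geq g(A)$. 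For the reverse bound $d_1(A)\leq g(A)$, I would fix $\e>g(A)$, choose $z$ with $\textnormal{Re}\,z>0$ and $\|(A-zI)^{-1}\|\geq\e^{-1}$ (possible since $\Lambda_\e(A)$ now meets the half-plane), write $R:=(A-zI)^{-1}$, select a unit vector $u$ with $\|Ru\|$ arbitrarily close to $\|R\|$, set $w:=Ru/\|Ru\|$, and use Hahn--Banach to pick $\phi\in X^{*}$ with $\|\phi\|=1$ and $\phi(w)=1$. The rank-one operator $Ex:=-\|Ru\|^{-1}\phi(x)\,u$ satisfies $(A-zI+E)w=0$, so $z\in\sigma(A+E)$ with $\textnormal{Re}\,z>0$, while $\|E\|=\|Ru\|^{-1}$ is as close as we like to $1/\|R\|\leq\e$. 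Thus $d_1(A)\leq\e$ for every $\e>g(A)$, giving $d_1(A)\leq g(A)$.

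With $d_1(A)=g(A)$ in hand, the transfer is immediate. For the lower inequality, whenever $\sigma_{2\e/M(A)}(A)\cap\{\textnormal{Re}\,z>0\}=\emptyset$, Lemma \ref{lem2} gives $\Lambda_\e(A)\subseteq\sigma_{2\e/M(A)}(A)$, so $\Lambda_\e(A)$ also misses the half-plane and $\e\leq g(A)=d_1(A)$; taking the supremum over all such $\e$ yields $d_1(A)\geq\max\{\e:\sigma_{2\e/M(A)}(A)\cap\{\textnormal{Re}\,z>0\}=\emptyset\}$. For the upper inequality, for every $\e<g(A)$ we have $\Lambda_\e(A)\cap\{\textnormal{Re}\,z>0\}=\emptyset$, and Lemma \ref{lem3} gives $\sigma_{\e/(\e+2\|A\|)}(A)\subseteq\Lambda_\e(A)$, so $\sigma_{\e/(\e+2\|A\|)}(A)$ misses the half-plane as well; hence each such $\e$ is admissible in the maximisation on the right, and letting $\e\uparrow g(A)$ gives $d_1(A)=g(A)\leq\max\{\e:\sigma_{\e/(\e+2\|A\|)}(A)\cap\{\textnormal{Re}\,z>0\}=\emptyset\}$. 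The elementary bound $\|A-zI\|\geq M(A)/2$ from Lemma \ref{lem4} is what makes Lemma \ref{lem2} usable here.

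The hard part is the reverse pseudospectral inclusion underlying $d_1(A)\leq g(A)$: in a general Banach space one cannot, in general, realise the resolvent norm by an exact rank-one perturbation, so I would only claim perturbations of norm arbitrarily close to $1/\|(A-zI)^{-1}\|$ and then pass to the limit — which is precisely why the argument is phrased with $\e>g(A)$ and a supremum rather than with an exact minimiser. A secondary technical point is attainment of the extrema: the ``$\max$'' in the statement should be read as a supremum, and it is reached because $\Lambda_\e(A)$ and $\sigma_\e(A)$ are closed, increase monotonically in $\e$, and $z\mapsto\|(A-zI)^{-1}\|$ is continuous on the resolvent set; the limiting formulation above sidesteps this in the main estimates and relegates it to a remark. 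Everything else is routine.
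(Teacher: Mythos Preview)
Your proposal is correct and follows essentially the same route as the paper: establish the pseudospectral identity $d_1(A)=\sup\{\e>0:\Lambda_\e(A)\cap\{\textnormal{Re}\,z>0\}=\emptyset\}$ and then invoke the inclusions of Lemmas~\ref{lem2} and~\ref{lem3} to pass to the condition pseudospectrum. The only difference is that the paper asserts the pseudospectral identity in one line from the equivalent perturbation definition of $\Lambda_\e(A)$ (cf.\ Definition~\ref{eqn3}, taken from \cite{tre}), whereas you reprove it by hand via the Neumann-series lower bound and a Hahn--Banach rank-one construction; your added care about approximate norm attainment in a general Banach space is appropriate and does not change the argument's structure.
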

\begin{proof}
From the definition of the pseudospectrum and $d_1(A)$ we have 
\begin{eqnarray*}
d_1(A)&=& \min\{\e>0: \Lambda_\e(A)\cap \{z: \mbox{Re}\, z>0\} \neq \emptyset\}\\
&=& \max\{\e>0: \Lambda_\e(A)\cap \{z: \mbox{Re}\, z>0\}= \emptyset\}.
\end{eqnarray*}
From Lemma \ref{lem2} and Lemma \ref{lem3}, for $\e>0$,
\[
\sigma_{\frac{\e}{\e+2\|A\|}}(A)\subseteq \Lambda_\e(A)\subseteq \sigma_{\frac{2\e}{M(A)}}(A).
\]
Hence
\[
\sigma_{\frac{2\e}{M(A)}}(A)\cap \{z: \mbox{Re}\, z>0\}= \emptyset \ \ \ \Longrightarrow \ \ \
\Lambda_\e(A) \cap \{z: \mbox{Re}\, z>0\}= \emptyset,
\]
and
\[
\Lambda_\e(A) \cap \{z: \mbox{Re}\, z>0\}= \emptyset \ \ \ \Longrightarrow \ \ \ \sigma_{\frac{\e}{\e+2\|A\|}}(A) \cap \{z: \mbox{Re}\, z>0\}= \emptyset.
\]
\end{proof}
\section{Characterization of the distance to singularity of an operator from the condition pseudospectrum}
\begin{definition}
Let $A\in BL(X)$. The distance to singularity of $A$ is denoted by $d_2(A)$ and is defined as
\[
d_2(A):= \min\{\|E\|: A+E \ \mbox{is singular}\}.
\]
\end{definition}
The following is an equivalent definition of the pseudospectrum of an operator in $BL(X)$ \cite{tre}.
\begin{definition}\label{eqn3}
Let $A\in BL(X)$ and $\e>0$. Then
\[
\Lambda_\e(A)= \{z: z\in \sigma(A+E), \|E\|\leq \e\}.
\]
\end{definition}
\begin{lemma}\label{lem1}
Let $A\in BL(X)$ and $\e>0$. Then $0\notin \Lambda_\e(A)$ if and only if $0\notin \sigma_{\frac{\e}{\|A\|}}(A)$.
\end{lemma}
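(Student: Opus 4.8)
The plan is to prove both implications at once by evaluating the two defining inequalities at the single point $z=0$ and checking that they reduce to literally the same condition on $\|A^{-1}\|$.

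First I would unwind each side. By the definition of the pseudospectrum, $0\notin\Lambda_\e(A)$ means precisely that $0\notin\sigma(A)$ (so that $A$ is invertible) together with $\|A^{-1}\|<\e^{-1}$. By the definition of the condition pseudospectrum with parameter $\e/\|A\|$, the statement $0\notin\sigma_{\e/\|A\|}(A)$ means precisely that $A$ is invertible together with $\|A\|\,\|A^{-1}\|<\|A\|/\e$.

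The key step is then a one-line cancellation: invertibility of $A$ forces $\|A\|>0$, so dividing the inequality $\|A\|\,\|A^{-1}\|<\|A\|/\e$ through by $\|A\|$ gives exactly $\|A^{-1}\|<\e^{-1}$, and the implication reverses by multiplying back. Hence the two characterizing conditions coincide, and the biconditional follows immediately.

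The point I would flag as the only real obstacle is the admissibility of the parameter $\e/\|A\|$, since $\sigma_\delta$ is introduced only for $0<\delta<1$. I would dispose of this using the elementary bound $\|A^{-1}\|\geq\|A\|^{-1}$, which follows from $1=\|I\|=\|AA^{-1}\|\leq\|A\|\,\|A^{-1}\|$. This shows that $\|A^{-1}\|<\e^{-1}$ can hold only when $\e<\|A\|$, i.e. when $\e/\|A\|<1$, so whenever either side of the equivalence is true the parameter already lies in the admissible range. In the complementary regime $\e\geq\|A\|$ both sides are false: the bound $\|A^{-1}\|\geq\|A\|^{-1}\geq\e^{-1}$ puts $0$ in $\Lambda_\e(A)$, while $\e/\|A\|\geq 1$ makes $\sigma_{\e/\|A\|}(A)=\C$, which contains $0$. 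The case where $A$ is not invertible is handled the same way, since then $0\in\sigma(A)$ lies in both sets by definition. Thus the equivalence holds without exception.
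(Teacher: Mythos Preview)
Your proof is correct and follows essentially the same route as the paper's: unwind both definitions at $z=0$ and observe that $\|A^{-1}\|<\e^{-1}$ and $\|A\|\,\|A^{-1}\|<\|A\|/\e$ are the same inequality once $\|A\|>0$. You are in fact more careful than the paper, which simply multiplies by $\|A\|$ and declares the converse ``quite similar,'' omitting your discussion of the admissibility of the parameter $\e/\|A\|$ and the degenerate regime $\e\geq\|A\|$.
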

\begin{proof}
Suppose $0\notin \Lambda_\e(A)$ for some $\e> 0$. Then $0\notin \sigma(A)$ and $\|A^{-1}\|<\frac{1}{\e}$. Hence $\|A\|\neq 0$ and $\|A\|\|A^{-1}\|<\frac{\|A\|}{\e}$. Thus $0\notin \sigma_{\frac{\e}{\|A\|}}(A)$. The proof of the reverse inclusion is quite similar.
\end{proof}
The following theorem characterizes the distance to singularity of an operator $A\in BL(X)$ from the condition pseudospectrum of $A$.
\begin{theorem}
Let $A\in BL(X)\smallsetminus \mathcal{I}$. Then
\[
d_2(A)= \max\left\{\e> 0: 0\notin \sigma_\frac{\e}{\|A\|}(A)\right\}. 
\] 
\end{theorem}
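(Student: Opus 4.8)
The plan is to run the same two-step scheme used just above for the distance to instability: first rewrite $d_2(A)$ as a pseudospectral quantity evaluated at the single point $z=0$, and then transfer it to the condition pseudospectrum through the exact equivalence of Lemma~\ref{lem1}. Throughout I treat $A$ as invertible, which is the case of interest (if $A$ were singular then $0\in\sigma(A)\subseteq\sigma_\e(A)$ for every admissible $\e$, and the set on the right-hand side would be empty). The hypothesis $A\notin\mathcal{I}$ will be used to keep the situation nondegenerate; in particular it forces $A\neq 0$, so that dividing by $\|A\|$ in Lemma~\ref{lem1} is legitimate.

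First I would note that $A+E$ is singular exactly when $0\in\sigma(A+E)$. Hence, by the equivalent description of the pseudospectrum in Definition~\ref{eqn3}, for each $\e>0$ we have $0\in\Lambda_\e(A)$ if and only if there exists $E\in BL(X)$ with $\|E\|\le\e$ for which $A+E$ is singular. Therefore the family of perturbations appearing in $d_2(A)=\min\{\|E\|:A+E\ \text{is singular}\}$ is exactly controlled by the parameter condition $0\in\Lambda_\e(A)$, and since $\Lambda_\e(A)$ increases with $\e$ this gives
\[
d_2(A)=\min\{\e>0:0\in\Lambda_\e(A)\}=\max\{\e>0:0\notin\Lambda_\e(A)\},
\]
precisely as in the distance-to-instability argument. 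I would then apply Lemma~\ref{lem1}, which supplies the exact equivalence $0\notin\Lambda_\e(A)\iff 0\notin\sigma_{\frac{\e}{\|A\|}}(A)$ for every $\e>0$; substituting it into the last display replaces $\Lambda_\e(A)$ by $\sigma_{\frac{\e}{\|A\|}}(A)$ and yields the asserted identity
\[
d_2(A)=\max\left\{\e>0:0\notin\sigma_{\frac{\e}{\|A\|}}(A)\right\}.
\]

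The step that needs the most care, and the one I expect to be the main obstacle, is the attainment of the extremum together with the admissible range of the parameter. Both governing inequalities are nonstrict, so $\{\e>0:0\in\Lambda_\e(A)\}$ is closed from below and the infimum defining $d_2(A)$ is genuinely attained; dually the set $\{\e>0:0\notin\Lambda_\e(A)\}=\{\e>0:0\notin\sigma_{\frac{\e}{\|A\|}}(A)\}$ is an interval whose supremum equals $d_2(A)$. One must therefore either verify that this value is actually reached, i.e. decide whether $0\in\sigma_{\frac{d_2(A)}{\|A\|}}(A)$, or read the symbol $\max$ as a supremum. This is also where $A\notin\mathcal{I}$ enters: one always has $d_2(A)\le\|A\|$ (take $E=-A$), so the relevant parameters satisfy $\e/\|A\|<1$ and stay inside the admissible range of the condition pseudospectrum (recall $\sigma_1(A)=\C$), while excluding the scalar operators rules out the degenerate case in which the condition pseudospectrum collapses onto the spectrum. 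Settling this boundary behaviour is the only nonroutine point; granting it, the identity follows immediately from Definition~\ref{eqn3} and Lemma~\ref{lem1}.
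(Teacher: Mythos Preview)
Your proposal is correct and follows essentially the same route as the paper: rewrite $d_2(A)$ via Definition~\ref{eqn3} as $\max\{\e>0:0\notin\Lambda_\e(A)\}$ and then invoke Lemma~\ref{lem1} to pass to the condition pseudospectrum. Your additional remarks on attainment of the maximum and on the role of the hypothesis $A\notin\mathcal{I}$ are more careful than the paper's own treatment, which simply asserts the min/max identities without further comment.
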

\begin{proof}
From the definition of $d_2(A)$ we have
\[
d_2(A)= \min\{\|E\|: 0\in \sigma(A+E)\}.
\]
From Definition \ref{eqn3}, 
\begin{eqnarray*}
d_2(A)&=& \min\{\e>0: 0\in \Lambda_\e(A)\},\\
&=& \max\{\e>0: 0\notin \Lambda_\e(A)\}.
\end{eqnarray*}
From Lemma \ref{lem1},
\begin{eqnarray*}
d_2(A)&=& \max\left\{\e> 0: 0\notin \sigma_\frac{\e}{\|A\|}(A)\right\}.
\end{eqnarray*}
\end{proof}
\bibliographystyle{amsplain}

\end{document}